\numberwithin{equation}{section}
\newtheorem{defn}{Definition}[section]
\newtheorem{theorem}{Theorem}[section]
\newtheorem{corollary}[theorem]{Corollary}
\theoremstyle{definition}
\newtheorem{lemma}[theorem]{Lemma}
\newtheorem{prop}[theorem]{Proposition}
\newtheorem{remark}[theorem]{REMARK}
\def \begineq{\begin{equation}}
\def \endeq{\end{equation}}
\def \bb{\mathbb}
\def \CC{{\bb{C}}}
\def \RR{{\bb{R}}}
\def \ZZ{{\bb{Z}}}
\def \({\left(}
\def \){\right)}
\def \<{\langle}
\def \>{\rangle}
\def \ds{\displaystyle}
\begin{document}

\title{Equivariant principal bundles on nonsingular toric varieties}

\author[A. Dey]{Arijit Dey}

\address{Department of Mathematics, Indian Institute of Technology-Madras, Chennai, India }

\email{arijitdey@gmail.com}

\author[M. Poddar]{Mainak Poddar}

\address{Departamento de
Matem\'aticas, Universidad de los Andes, Bogot\'a, Colombia}

\email{mainakp@gmail.com}

\subjclass[2010]{32L05,14M25}

\keywords{equivariant bundles, principal bundles, toric varieties}


\abstract We give a classification of the equivariant principal
$G$-bundles on a nonsingular toric variety when $G$ is a closed
Abelian subgroup of $GL_k(\CC)$. We prove that any such bundle splits, 
that is, admits a reduction of structure group to the intersection of $G$
with a torus. We give an explicit parametrization of the isomorphism classes of such bundles 
for a large family of $G$ when $X$ is complete.
\endabstract

\maketitle

\section{Introduction}
 Denote the algebraic torus $(\CC^*)^n$ by $T$. A $T$-equivariant principal
  $G$-bundle on a complex manifold $X$
is a locally trivial, principal $G$-bundle $\pi: \mathcal{E} \to
X$ such that $\mathcal{E}$ and $X$ are left $T$-spaces, the map
$\pi$ is $T$-equivariant and the actions of $T$ and $G$ commute:
$$t(e\cdot g)= (te)\cdot g  \; {\rm for\; all\;} t\in T, \; g \in G
 \; {\rm and}\; e\in
\mathcal{E}.$$
 If, in addition, the bundle $\pi: \mathcal{E} \to X
$ and the actions of $T$ and $G$ are holomorphic we say that
$\mathcal{E}$ is a holomorphic $T$-equivariant principal
$G$-bundle.

Let $X_{\Xi}$ be a complete nonsingular toric variety of dimension
$n$ corresponding to a fan $\Xi$. Denote the set of
$d$-dimensional cones in $\Xi$ by $\Xi(d)$. For a cone $\sigma$ in
$\Xi$, denote the corresponding affine variety by $X_{\sigma}$ and
the corresponding  $T$-orbit by $O_{\sigma}$. Note that each orbit
$O_{\sigma}$ has a natural group structure and the principal orbit
$O$ is identified with $T$ (see \cite{Oda}, Proposition 1.6).
 Let $T_{\sigma}$
denote the stabilizer of any point in $O_{\sigma}$. Then $T$
admits a decomposition, $T \cong T_{\sigma} \times O_\sigma$. Let
$\pi_{\sigma}: T \to T_{\sigma} $ be the associated projection.

Let $G$ be an Abelian subgroup of $GL(k,\CC)$ for some positive
integer $k$. Note that a classification of such groups is given in
\cite{AM}, Proposition 2.3. Assume further that $G$ is closed in
$GL_k(\CC)$. Then our main theorem is the following (same as
Theorem \ref{thm:clas}).

\begin{theorem}\label{thm:main}  The isomorphism classes of holomorphic
 $T$-equivariant principal $G$-bundles on $X_{\Xi}$ are in one-to-one
correspondence with collections of holomorphic group homomorphisms
$\{\rho_{\sigma}: T_{\sigma} \to G \,|\, \sigma \; {\rm is \; a\;
maximal \; cone\; in\;} \Xi \}$ which satisfy the extension
condition: Each $ (\rho_{\tau} \circ \pi_{\tau})  (\rho_{\sigma}
\circ \pi_{\sigma} )^{-1}$ extends to a $G$-valued holomorphic
function over $X_{\sigma} \cap X_{\tau}$.
\end{theorem}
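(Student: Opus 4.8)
The plan is to reduce the classification to the affine charts $X_{\sigma}$, $\sigma$ a maximal (hence $n$-dimensional) cone, on which $X_{\sigma}\cong\CC^n$ carries a linear $T$-action with a single fixed point $x_{\sigma}$, and $T_{\sigma}=T$, $\pi_{\sigma}=\mathrm{id}$. The first and central step is to show that the restriction $\mathcal{E}|_{X_{\sigma}}$ of a holomorphic $T$-equivariant principal $G$-bundle is $T$-equivariantly isomorphic to $X_{\sigma}\times G$ with the twisted action $t\cdot(x,g)=(tx,\rho_{\sigma}(t)\,g)$ for some holomorphic homomorphism $\rho_{\sigma}\colon T_{\sigma}\to G$. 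Since $G$ is, by the structure theorem for closed Abelian subgroups of $GL_k(\CC)$ quoted from \cite{AM}, a product of a complex torus, a vector group and a finite group, every holomorphic principal $G$-bundle on the contractible Stein manifold $\CC^n$ is holomorphically trivial (one has $H^{1}(\CC^n,\mathcal{O})=0=H^{2}(\CC^n,\ZZ)$, handling the torus and vector factors via the exponential sequence, and $\pi_{1}(\CC^n)=1$, handling the finite factor). Fixing a trivialization and transporting the $T$-action gives a holomorphic cocycle $\alpha\colon T\times X_{\sigma}\to G$; then $\rho_{\sigma}(t):=\alpha(t,x_{\sigma})$ is a homomorphism by the cocycle identity at the fixed point, and a contraction argument removes the $x$-dependence of $\alpha$: pick a one-parameter subgroup $\lambda\colon\CC^{*}\to T$ with $\lim_{s\to0}\lambda(s)\cdot x=x_{\sigma}$ for every $x\in X_{\sigma}$ and gauge by $\phi(x):=\lim_{s\to0}\rho_{\sigma}(\lambda(s))\,\alpha(\lambda(s),x)^{-1}$. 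Finally $\rho_{\sigma}$ is intrinsic to $\mathcal{E}$: the fibre $\mathcal{E}_{x_{\sigma}}$ is a $G$-torsor acted on by $T$ through torsor automorphisms, and for Abelian $G$ the automorphism group of a $G$-torsor is canonically $G$; so $\rho_{\sigma}$ does not change under isomorphism of equivariant bundles or under a change of equivariant trivialization.

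To pass from a bundle to the data, I would fix equivariant trivializations $s_{\sigma}\colon X_{\sigma}\times G\to\mathcal{E}|_{X_{\sigma}}$ for all maximal $\sigma$. On an overlap $X_{\sigma}\cap X_{\tau}=X_{\sigma\cap\tau}$ the comparison $s_{\tau}^{-1}\circ s_{\sigma}$ is left multiplication by a holomorphic map $g_{\tau\sigma}\colon X_{\sigma\cap\tau}\to G$, and intertwining the $\rho_{\sigma}$- and $\rho_{\tau}$-twisted actions forces $g_{\tau\sigma}(tx)=\rho_{\tau}(t)\,g_{\tau\sigma}(x)\,\rho_{\sigma}(t)^{-1}$. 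Restricting to the dense principal orbit $O\cong T$ and evaluating at the identity gives $g_{\tau\sigma}|_{O}=g_{\tau\sigma}(1)\cdot(\rho_{\tau}\circ\pi_{\tau})(\rho_{\sigma}\circ\pi_{\sigma})^{-1}$. Since $g_{\tau\sigma}$ is an honest $G$-valued holomorphic function on all of $X_{\sigma}\cap X_{\tau}$, so is $(\rho_{\tau}\circ\pi_{\tau})(\rho_{\sigma}\circ\pi_{\sigma})^{-1}=g_{\tau\sigma}(1)^{-1}g_{\tau\sigma}$; this is the required extension condition, and by the previous paragraph the assignment $\mathcal{E}\mapsto\{\rho_{\sigma}\}$ depends only on the isomorphism class.

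Conversely, starting from a collection $\{\rho_{\sigma}\}$ satisfying the extension condition, I would let $g_{\tau\sigma}\colon X_{\sigma}\cap X_{\tau}\to G$ be the unique holomorphic extension (unique because $O$ is dense in $X_{\sigma\cap\tau}$) of $(\rho_{\tau}\circ\pi_{\tau})(\rho_{\sigma}\circ\pi_{\sigma})^{-1}$ from $O$. The cocycle identity $g_{\mu\tau}g_{\tau\sigma}=g_{\mu\sigma}$ holds on $O$ because the homomorphisms telescope, hence everywhere by density, so the $g_{\tau\sigma}$ glue the trivial pieces $X_{\sigma}\times G$ into a holomorphic principal $G$-bundle; the $\rho_{\sigma}$-twisted $T$-actions are compatible with this gluing (again it suffices to check on $O$), yielding a holomorphic $T$-equivariant principal $G$-bundle whose invariants are exactly $\{\rho_{\sigma}\}$. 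An equivariant refinement of the elementary fact that cohomologous cocycles define isomorphic bundles — every verification again reduced to the dense orbit $O$ — shows that two equivariant bundles with the same $\{\rho_{\sigma}\}$ are isomorphic, and combined with the above this yields the claimed bijection.

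The step I expect to be the main obstacle is the equivariant triviality of $\mathcal{E}|_{X_{\sigma}}$ in the first paragraph: beyond the ordinary holomorphic triviality of principal $G$-bundles over $\CC^n$ (where the explicit shape of $G$ from \cite{AM} is used), one must promote a trivialization to an equivariant one, and the convergence and holomorphy of the gauge $\phi$ obtained from the contracting one-parameter subgroup is the delicate point. After that, the cocycle computation, the uses of density of $O$, and the verification of bijectivity are all routine bookkeeping.
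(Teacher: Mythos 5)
Your architecture coincides with the paper's: trivialize $\mathcal{E}|_{X_\sigma}$ holomorphically, gauge the trivialization so the $T$-action becomes $t\cdot(x,g)=(tx,\rho_\sigma(t)g)$, read off the extension condition from equivariance of the transition functions, and reverse the process by gluing. The cocycle bookkeeping, the density-of-$O$ arguments, and the torsor observation that $\rho_\sigma$ is intrinsic for Abelian $G$ are all sound and correspond to Lemma \ref{lem:equiv}, Remark \ref{rem:indep} and Section 4 of the paper.

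The genuine gap is exactly where you flag it, and it is not a peripheral technicality but the mathematical core of the paper: the existence, holomorphy in $x$, and $G$-valuedness of the gauge $\phi(x)=\lim_{s\to0}\rho_\sigma(\lambda(s))\,\alpha(\lambda(s),x)^{-1}$ is asserted, not proved. The two factors $\alpha(\lambda(s),x_\sigma)$ and $\alpha(\lambda(s),x)^{-1}$ individually diverge as $s\to0$ in general (already for $\mathcal{O}(d)$ on $\PP^1$ the first factor behaves like $s^{\pm d}$), so the convergence of the product is a cancellation that must be established. The paper does this in Proposition \ref{prop:lcC} and Theorem \ref{thm:homo}: the relevant quotient is packaged as a $G$-valued holomorphic function $F$ on $T=(\CC^*)^n$, and an induction over the coordinate hyperplanes $\{z_k=0\}$ (plus the Riemann extension theorem) reduces everything to the one-variable statement that a holomorphic $f:\CC^*\to G\subset GL_k(\CC)$ with $\lim_{z\to0}f(zt)f(z)^{-1}=I$ for all $t$ extends holomorphically across $0$ with $f(0)$ invertible; that statement is proved by a Laurent-coefficient computation, and the closedness of $G$ in $GL_k(\CC)$ is used precisely to guarantee $f(0)\in G$ --- a use of the hypothesis that appears nowhere in your sketch. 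Abelianness of $G$ is likewise used repeatedly inside this extension argument (to commute factors such as $\rho_s(0,tz)$ past the others), not only in the torsor identification. A minor secondary inaccuracy: a closed Abelian subgroup of $GL_k(\CC)$ need not be a torus times a vector group times a \emph{finite} group (an infinite discrete unipotent $\ZZ$ in $GL_2(\CC)$ is closed and Abelian), but this does not affect your triviality claim, since Grauert's Oka principle --- which is what the paper actually invokes --- gives holomorphic triviality over $\CC^n$ for an arbitrary complex Lie structure group.
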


A similar classification for algebraic $T$-equivariant bundles
over $X_{\Xi}$ is given in Theorem \ref{thm:clasA}.
 These theorems provide a partial analogue to Klyachko's
classification of vector bundles on toric varieties in \cite{Kly}.

In particular, we show that any $T$-equivariant principal $G$-bundle on a nonsingular
affine toric variety is trivial if $G$ is a closed linear Abelian group (see Lemma \ref{lem:cone1}).    
The main tool used by Klyachko for the local  classification
(i.e. classification on affine toric variety) is representation theory.
In analogous situation, our main tool is complex analysis. However, our 
approach does not work when $G$ is not Abelian since the proof of Theorem 
\ref{thm:homo} relies heavily on this assumption. In this article, we are restricted to the case of 
nonsingular toric variety because we use Oka-Grauert theory.

As a corollary of the main theorem, we prove that any such $T$-equivariant principal
$G$-bundle splits, 
that is, admits a reduction of structure group to the intersection of $G$
with a torus, see Theorems \ref{thm:last1} and \ref{thm:last2}.
 In Corollary \ref{cor:last}, we give an explicit parametrization of the isomorphism classes of
$T$-equivariant principal $G$-bundles,
for a large family of $G$, when $\Xi$ is complete.
This generalizes the classical description of isomorphism classes of $T$-equivariant
line bundles.

 We also prove that if $G$ is any discrete group, then any
holomorphic $T$-equivariant principal $G$-bundle on $X_{\Xi}$ is
trivial with trivial $T$-action (Theorem \ref{thm:discrete}).
Our
method may be used to prove a similar result for
$\Gamma^n$-equivariant principal $G$-bundles over a topological
toric manifold \cite{FIM} where $G$ is discrete and $\Gamma=S^1
\times \RR $ acts smoothly.

\section{Local action functions}



Let $X= X_{\sigma}$ where $\sigma \in \Xi$. For any sub-cone
$\delta \le \sigma$ we denote the corresponding $T$-orbit by
$O_{\delta}$. As $T$  is Abelian, the stabilizer of $x$ in $T$ is
the same  for all $x \in O_{\delta}$. Denote this stabilizer
subgroup by $T_{\delta}$.


 Suppose $\mathcal{E}$ is a $T$-equivariant principal $G$-bundle
over $X$. Assume that $\mathcal{E}$ is trivial (we will show in
Lemma \ref{lem:cone1} that this holds for suitable $G$).
 Let $s: X
\to \mathcal{E}$ be any holomorphic section. We encode the
$T$-action on $\mathcal{E}$ as follows.

\begin{defn} For any $x\in X$ and $t\in T$, define $\rho_s(x,t) \in G$ as
follows,
\begin{equation}\label{eq:rho1}
t s(x) = s(tx)\cdot \rho_s(x,t)
\end{equation}
We say that $\rho_s: X \times T \to G$ is a local action function.
\end{defn}

 Since the action of
$G$ on each fiber of $\mathcal{E}$ is free, it follows that
$\rho_s(x,t)$ is well-defined and holomorphic in $x$ and $t$.

 It is easy to check that if $s'(x)= s(x)\cdot \gamma(x)$ is another section, then
\begin{equation}\label{eq:rho2}
 \rho_{s'}(x,t) = \gamma(tx)^{-1} \rho_s (x,t) \gamma(x)
\end{equation}

\begin{lemma}\label{lem:rho3} For any $t_1$, $t_2$ in $T$,
$\rho_s(x, t_1t_2)= \rho_s(t_2x,t_1) \rho_s(x,t_2)$.
\end{lemma}

\begin{proof} Obviously, $t_1t_2s(x) = s(t_1t_2x) \cdot \rho_s(x,t_1t_2)$. On the
other hand, $$t_1t_2s(x) = t_1( s(t_2x) \cdot \rho_s(x, t_2 )) =
(t_1 s(t_2x)) \cdot \rho_s(x, t_2 )  = s(t_1t_2x) \cdot
\rho_s(t_2x, t_1) \rho_s(x,t_2).$$
\end{proof}

It follows that if $x \in O_{\delta}$ then the restriction
\begin{equation}\label{eq:rho4}
\rho_s(x, \cdot): T_{\delta} \to G \end{equation} is a group
homomorphism.

Lemma \ref{lem:rho3} implies that the value of $\rho_s$ on any
$T$-orbit $O_{\delta}$ may be determined from the value of
$\rho_s$ at a point $x \in O_\delta$. However a stronger statement
holds.

\begin{lemma}\label{lem:rho4} The map $\rho_s$ is completely determined by its
restriction $\rho_s(x_0,\cdot): T \to G$ at any point $x_0$ in the
principal $T$-orbit $O$.
\end{lemma}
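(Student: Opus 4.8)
The statement asserts that $\rho_s$ is determined by its restriction to $\{x_0\}\times T$ for a single $x_0$ in the principal orbit $O\cong T$. The plan is to use Lemma~\ref{lem:rho3} together with the fact that $O$ is dense in $X=X_\sigma$ and everything in sight is holomorphic. First I would reduce to showing that $\rho_s$ is determined on all of $O\times T$, because $O\times T$ is dense in $X\times T$ and $\rho_s$ is continuous (indeed holomorphic), so two local action functions agreeing on $O\times T$ must agree everywhere by the identity principle.

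Second, I would show $\rho_s$ is determined on $O\times T$ by its values at $(x_0,\cdot)$. Since $O\cong T$ acts simply transitively on itself, any point $x\in O$ can be written as $x=t_x\cdot x_0$ for a unique $t_x\in T$. Applying Lemma~\ref{lem:rho3} with the substitution $x=x_0$, $t_1=t$, $t_2=t_x$ gives
\begin{equation}\label{eq:rho5}
\rho_s(x_0, t\,t_x) = \rho_s(t_x x_0, t)\,\rho_s(x_0,t_x) = \rho_s(x,t)\,\rho_s(x_0,t_x),
\end{equation}
hence
\begin{equation}\label{eq:rho6}
\rho_s(x,t) = \rho_s(x_0, t\,t_x)\,\rho_s(x_0,t_x)^{-1}.
\end{equation}
The right-hand side of \eqref{eq:rho6} involves only the restriction $\rho_s(x_0,\cdot)$, so this exhibits $\rho_s$ on $O\times T$ as completely determined by that restriction.

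Combining the two steps: if $s$ and $s'$ are sections (or even the same section described two ways) whose associated local action functions agree at $(x_0,\cdot)$, then by \eqref{eq:rho6} they agree on the dense subset $O\times T$, and by holomorphicity (or merely continuity) in $x$ they agree on all of $X\times T$. I do not expect a serious obstacle here; the only point requiring a little care is the passage from $O\times T$ to $X\times T$, where one should note that $X_\sigma$ is irreducible so that $O$ is dense and the difference (or quotient) of the two $G$-valued holomorphic functions, vanishing on a dense set, vanishes identically — for which it is cleanest to compose with a faithful holomorphic embedding $G\hookrightarrow GL_k(\CC)$ and apply the identity theorem entrywise. One might also remark that formula \eqref{eq:rho6} gives, conversely, an explicit reconstruction, which will be useful later when matching local action functions across overlaps $X_\sigma\cap X_\tau$.
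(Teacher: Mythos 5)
Your proof is correct and follows essentially the same route as the paper: both arguments determine $\rho_s$ on the dense principal orbit $O$ from $\rho_s(x_0,\cdot)$ via Lemma~\ref{lem:rho3}, and then extend to the lower-dimensional orbits by continuity of $\rho_s$ in $x$. The only cosmetic difference is that the paper carries out the extension step by an explicit limit along a one-parameter subgroup degenerating into each orbit $O_\delta$ (which yields a concrete limit formula that the paper reuses later, e.g.\ in the proof of Lemma~\ref{lem:rho5}), whereas you appeal to density of $O\times T$ in $X\times T$ together with continuity (the identity principle being more than is needed).
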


\begin{proof} Let $\delta \neq \{0\}$ be a sub-cone of $\sigma$.
  Let $x_\delta$ be any point in $O_\delta$. Then
there exist a point $x_1 \in O$ and a one parameter subgroup $\lambda^r(z)$ of $T$ such
that $\lim_{ z\to 0} \lambda^r(z)x_1 = x_\delta $ (see
\cite{Ful}, section 2.3). Then using Lemma \ref{lem:rho3}, we get $ \rho_s(x_1,
t \lambda^r(z))= \rho_s(\lambda^r(z) x_1,t) \rho_s(x_1,
\lambda^r(z) )$. Hence $\rho_s(\lambda^r(z) x_1,t)=  \rho_s(x_1, t
\lambda^r(z))\rho_s(x_1, \lambda^r(z) )^{-1}$. Taking limit as $z$
approaches $0$, we get
\begin{equation}\label{eq:rho5}
\rho_s(x_\delta, t) = \lim_{z\to 0} \rho_s(x_1, t
\lambda^r(z))\rho_s(x_1, \lambda^r(z) )^{-1}. \end{equation}
Since $\rho_{s}(x_1, \cdot)$ is determined by $\rho_{s}(x_0, \cdot)$, the lemma follows.
\end{proof}

\begin{lemma}\label{lem:equiv} Let $X_1$ and $X_2$ be affine toric varieties.
 Let $\alpha: X_1 \to X_2$ be an isomorphism of $T$-spaces up to an automorphism $a:T\to T$, i.e.
$\alpha \circ t = a(t) \circ \alpha$. Suppose $\pi_i: \mathcal{E}_i \to X_i$ is a $T$-equivariant trivial principal
$G$-bundle for $i=1,2$.
 Let $\phi: \mathcal{E}_1 \to \mathcal{E}_2$ be an
isomorphism of $T$-equivariant principal $G$-bundles over $X$ compatible with $\alpha$ and $a$:
$$ \pi_2 \circ \phi = \alpha \circ \pi_1  \quad {\rm and} \quad  \phi \circ t = a(t) \circ \phi. $$
 Let $s_1$ be any section of
$\mathcal{E}_1$ and let $s_2$ be the section of $\mathcal{E}_2$
defined by $s_2(\alpha(x))= \phi(s_1(x))$ for $x \in X_1$.
 Then $\rho_{s_1}(x, t ) = \rho_{s_2}(\alpha(x),a(t))$ for every $x\in X_1$, $t \in
 T$. In particular, if $\alpha$ and $a$ are both identity then $\rho_{s_1} =
 \rho_{s_2}$.
\end{lemma}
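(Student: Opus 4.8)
The plan is to unwind the definitions directly, using equation \eqref{eq:rho1} together with the compatibility hypotheses on $\phi$ and $\alpha$. Fix $x \in X_1$ and $t \in T$. We want to compute $\rho_{s_2}(\alpha(x), a(t))$, which by definition satisfies $a(t)\, s_2(\alpha(x)) = s_2(a(t)\alpha(x)) \cdot \rho_{s_2}(\alpha(x), a(t))$. The idea is to rewrite both sides in terms of $\phi$ and $s_1$ and then read off the element of $G$ by using freeness of the $G$-action on the fibers of $\mathcal{E}_2$.

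First I would handle the left-hand side: by construction $s_2(\alpha(x)) = \phi(s_1(x))$, so $a(t)\, s_2(\alpha(x)) = a(t)\, \phi(s_1(x)) = \phi(t\, s_1(x))$ using the intertwining relation $\phi \circ t = a(t) \circ \phi$. Then apply \eqref{eq:rho1} for $\mathcal{E}_1$ to get $t\, s_1(x) = s_1(tx) \cdot \rho_{s_1}(x,t)$, and since $\phi$ is a morphism of principal bundles it is $G$-equivariant, so $\phi(t\, s_1(x)) = \phi(s_1(tx)) \cdot \rho_{s_1}(x,t) = s_2(\alpha(tx)) \cdot \rho_{s_1}(x,t)$. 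Next I would handle the right-hand side: the compatibility $\alpha \circ t = a(t) \circ \alpha$ gives $a(t)\alpha(x) = \alpha(tx)$, so $s_2(a(t)\alpha(x)) = s_2(\alpha(tx))$. Putting the two computations together yields
\[
s_2(\alpha(tx)) \cdot \rho_{s_1}(x,t) = s_2(\alpha(tx)) \cdot \rho_{s_2}(\alpha(x), a(t)),
\]
and cancelling $s_2(\alpha(tx))$ using freeness of the $G$-action on the fiber over $\alpha(tx)$ gives $\rho_{s_1}(x,t) = \rho_{s_2}(\alpha(x), a(t))$, as desired. The final sentence is then immediate: if $\alpha = \mathrm{id}_X$ and $a = \mathrm{id}_T$, the formula reads $\rho_{s_1}(x,t) = \rho_{s_2}(x,t)$ for all $x$ and $t$.

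I do not anticipate a serious obstacle here; this is essentially a diagram-chase. The only points requiring mild care are making sure the well-definedness is legitimate — that is, that $s_2$ as defined is genuinely a holomorphic section of $\mathcal{E}_2$, which follows because $\alpha$ is an isomorphism (so $s_2 = \phi \circ s_1 \circ \alpha^{-1}$) — and keeping track of which fiber each equality lives over so that the cancellation via freeness of the $G$-action is applied at the correct point, namely over $\alpha(tx) = a(t)\alpha(x)$. One should also note that $G$-equivariance of $\phi$ is part of being a morphism of principal $G$-bundles, so $\phi(e \cdot g) = \phi(e) \cdot g$, which is what licenses pushing $\rho_{s_1}(x,t)$ through $\phi$.
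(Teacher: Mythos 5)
Your argument is correct and is essentially identical to the paper's proof: both compute $a(t)\,s_2(\alpha(x))$ two ways via the same chain of identities ($s_2\circ\alpha=\phi\circ s_1$, the intertwining of $\phi$ with $t$ and $a(t)$, $G$-equivariance of $\phi$, and $\alpha\circ t=a(t)\circ\alpha$) and then cancel using freeness of the $G$-action on the fiber over $a(t)\alpha(x)=\alpha(tx)$. No issues.
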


\begin{proof} The lemma follows from the following calculation.

 $$ \begin{array}{l} s_2(a(t) \alpha(x))\cdot \rho_{s_2}(\alpha(x),a(t)) \\
 = a(t) s_2( \alpha(x)) =  a(t) \phi( s_1 (x) ) \\
 = \phi (t s_1(x) ) = \phi(s_1(tx)\cdot \rho_{s_1}(x,t)) \\
= \phi(s_1(tx)) \cdot \rho_{s_1}(x,t)  = s_2(\alpha(tx))\cdot \rho_{s_1}(x,t) \\
 = s_2(a(t) \alpha(x) ) \cdot \rho_{s_1}(x,t).  \end{array}$$
\end{proof}

\begin{lemma}\label{lem:rho5} If $\rho_s(x,\cdot)$ is independent of $x$, then
it defines a group homomorphism $\rho_s: T \to G$. Conversely if
$\rho_s(x_0,\cdot)$ is a group homomorphism for some $x_0 \in O$,
then $\rho_s(x,t)$ is independent of $x$.
\end{lemma}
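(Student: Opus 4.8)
The plan is to handle the two implications separately, and in the converse direction to first establish the claim on the principal orbit $O$ and then propagate it to the lower-dimensional orbits.

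For the forward implication I would simply specialize Lemma \ref{lem:rho3}. If $\rho_s(x,\cdot)$ has a value independent of $x$, write $\rho_s(t)$ for that common value; then Lemma \ref{lem:rho3} reads $\rho_s(t_1t_2) = \rho_s(t_1)\rho_s(t_2)$, which is exactly the statement that $\rho_s\colon T \to G$ is a group homomorphism. This step is routine.

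For the converse, suppose $\rho_s(x_0,\cdot)$ is a homomorphism for some $x_0 \in O$. Evaluating Lemma \ref{lem:rho3} at $x_0$ gives $\rho_s(x_0, t_1t_2) = \rho_s(t_2x_0, t_1)\,\rho_s(x_0,t_2)$, while the homomorphism hypothesis gives $\rho_s(x_0, t_1t_2) = \rho_s(x_0,t_1)\,\rho_s(x_0,t_2)$. Cancelling the common right factor $\rho_s(x_0,t_2)$ — which is legitimate since the values lie in $G \subseteq GL_k(\CC)$ and are therefore invertible — yields $\rho_s(t_2x_0, t_1) = \rho_s(x_0, t_1)$ for all $t_1, t_2 \in T$. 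Because $T$ acts simply transitively on the principal orbit $O$, the points $t_2 x_0$ exhaust $O$, so $\rho_s(x,\cdot) = \rho_s(x_0,\cdot)$ for every $x \in O$; in particular $\rho_s(x_1,\cdot)$ is a homomorphism for every $x_1 \in O$. To extend this to a general point $x_\delta \in O_\delta$ with $\delta \neq \{0\}$, I would invoke the limit formula \eqref{eq:rho5} from the proof of Lemma \ref{lem:rho4}: choosing $x_1 \in O$ and a one parameter subgroup $\lambda^r$ with $\lim_{z\to 0}\lambda^r(z)x_1 = x_\delta$, we have $\rho_s(x_\delta, t) = \lim_{z\to 0}\rho_s(x_1, t\lambda^r(z))\,\rho_s(x_1,\lambda^r(z))^{-1}$; since $\rho_s(x_1,\cdot)$ is multiplicative, the expression under the limit equals $\rho_s(x_1,t) = \rho_s(x_0,t)$ for every $z$, hence $\rho_s(x_\delta, t) = \rho_s(x_0, t)$. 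This shows $\rho_s(x,t)$ is independent of $x$ on all of $X$.

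I do not expect a genuine obstacle here; the converse is essentially a bookkeeping argument combining Lemmas \ref{lem:rho3} and \ref{lem:rho4}. The only two points warranting care are the cancellation step, which relies on invertibility of the values of $\rho_s$ (available since $G \le GL_k(\CC)$), and the order of operations in the converse: one must settle the principal orbit first so that the limit in \eqref{eq:rho5} collapses to the constant $\rho_s(x_0,t)$ precisely because $\rho_s(x_1,\cdot)$ is already known to be a homomorphism there.
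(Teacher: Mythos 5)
Your proposal is correct and follows the paper's own argument essentially verbatim: the same specialization of Lemma \ref{lem:rho3} for the forward direction, the same cancellation to get $\rho_s(ux_0,\cdot)=\rho_s(x_0,\cdot)$ on the principal orbit, and then the extension to lower-dimensional orbits via equation \eqref{eq:rho5}, which is one of the two finishing routes the paper explicitly offers (the other being holomorphicity of $\rho_s(\cdot,t)$ together with density of $O$). No issues.
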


\begin{proof} The first claim follows immediately from Lemma \ref{lem:rho3}.
On the other hand if $\rho_s(x_0,\cdot)$ is a group homomorphism,
then we have
$$ \rho_s(x_0, t) \rho_s(x_0, u) = \rho_s(x_0,tu) =  \rho_s(ux_0,t) \rho_s(x_0, u )   $$
for any $u,\,t \in T $. Therefore for any $u \in T$
$$ \rho_s(ux_0,\cdot) = \rho_s(x_0, \cdot).$$
Now the result follows either by using holomorphicity of
$\rho_s(x,t)$ in $x$, or equation \eqref{eq:rho5}.
\end{proof}

\begin{lemma}\label{lem:discrete} Suppose $G$ is a discrete group and
$X \cong \CC^d$. Let $\mathcal{E}$ be a $T$-equivariant holomorphic principal $G$-bundle
over $X$.  Then $\mathcal{E}$ is trivial and for any section $s$ of $\mathcal{E}$,
$\rho_s(x,\cdot):T \to G$ is independent of $x$ and is the trivial
homomorphism.
\end{lemma}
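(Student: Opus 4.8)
The plan is to establish the three assertions in sequence: first triviality of $\mathcal{E}$, then independence of $\rho_s(x,\cdot)$ on $x$, and finally that this homomorphism is trivial. For triviality, I would invoke the fact that $X \cong \CC^d$ is Stein and contractible, together with Oka--Grauert theory (already flagged as the main tool of the paper): a holomorphic principal $G$-bundle over a Stein manifold is holomorphically trivial if and only if it is topologically trivial, and since $G$ is discrete, the bundle is a covering space, hence classified by $\pi_1(X) = 1$, so it is topologically — and therefore holomorphically — trivial. Fix a holomorphic section $s$ so that $\rho_s: X \times T \to G$ is defined by \eqref{eq:rho1}.

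Next I would show $\rho_s(x,\cdot)$ is independent of $x$. The key point is that $X \times T$ is connected and $G$ is discrete, so any holomorphic (equivalently, continuous) map $X \times T \to G$ is locally constant, hence globally constant in each variable separately. In particular $\rho_s(x,t)$ does not depend on $x$; by Lemma \ref{lem:rho5} (or directly from Lemma \ref{lem:rho3}) it follows that the common value $\rho_s(\cdot, t) =: \rho(t)$ defines a group homomorphism $\rho: T \to G$.

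Finally, to see that $\rho$ is trivial: the torus $T = (\CC^*)^n$ is a connected group, and the continuous image of a connected group under a homomorphism into a discrete group must be the trivial subgroup, since $\rho(T)$ is both connected (as a continuous image of a connected space) and discrete, hence a single point, which must be the identity $e \in G$. Therefore $\rho_s(x,t) = e$ for all $x \in X$ and $t \in T$, as claimed.

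The main obstacle — really the only substantive point — is the triviality of the bundle: one must be careful to justify that a holomorphic principal $G$-bundle with $G$ discrete over a simply connected Stein manifold is holomorphically trivial. This is where the contractibility of $\CC^d$ (killing the topological classification, since such a bundle is a covering) combines with the holomorphic structure (each sheet of the covering is an open, hence a connected component, so the total space is a disjoint union of copies of $X$). Once triviality is in hand, the remaining two claims are immediate from connectedness and discreteness, with no real computation required.
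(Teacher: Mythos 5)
Your proposal is correct and follows essentially the same route as the paper: topological triviality from contractibility (the bundle being a covering with discrete fibers), a holomorphic section obtained because each sheet maps biholomorphically onto the base, and then constancy of $\rho_s$ in $x$ and in $t$ from connectedness of $X$ and $T$ versus discreteness of $G$, via Lemma \ref{lem:rho5}. The appeal to Oka--Grauert is harmless but unnecessary here, since the covering-space argument you also give already yields a holomorphic section directly.
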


\begin{proof} Note that as $X$ is contractible, the bundle $\mathcal{E}$ is
topologically trivial. Therefore it admits a continuous section $s$. But as $X$ is
connected and $G$ is discrete, $s$ is constant and hence holomorphic.

 Fix $t \in T$. Since $X$ is connected and $\rho_s(x,t)$
is continuous is $x$, it is a constant map due to the discreteness
of $G$. This implies that $\rho_s(x,\cdot)$ is independent of $x$ and
therefore is a group homomorphism $\rho_s(\cdot): T \to G$ by
Lemma \ref{lem:rho5}. Continuity in $t$ and connectedness of $T$
then imply that $\rho_s(t)$ is a constant. This completes the
proof.
\end{proof}


\section{Local action homomorphisms}

Suppose $\mathcal{E}$ is a holomorphic principal $G$-bundle over
$X \cong \CC^n$ where $G$ is a subgroup of $GL_k(\CC)$. Then by
Oka-Grauert theory \cite{Gra} $\mathcal{E}$ is trivial and admits
a holomorphic section $s$. We will show that if $G$ is Abelian and
closed in $GL_k(\CC)$, then $s$ can be chosen so that the local
action function $\rho_s$ is a homomorphism.

\begin{prop}\label{prop:lcC} Let $G$ be a subgroup of $GL_k(\CC)$
that is  closed in $GL_{k}(\CC)$. Suppose $f: \CC^* \to G$ is a
holomorphic map such that $\lim_{z \to 0} f(zt)f(z)^{-1}= I$  for
every $t \in \CC^*$. Then $f$ admits a holomorphic extension $f:
\CC \to G$.
\end{prop}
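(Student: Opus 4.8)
The plan is to show that the singularity of $f$ at $0$ is removable with $f(0)\in GL_k(\CC)$; closedness of $G$ in $GL_k(\CC)$ then forces $f(0)\in G$, and since $f$ is already holomorphic on $\CC^*$ this yields the desired holomorphic extension $f\colon\CC\to G$.

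\emph{Step 1: $f$ is meromorphic at $0$.} First I would fix some $t_0\in\CC^*$ with $|t_0|<1$. By hypothesis the holomorphic map $z\mapsto f(zt_0)f(z)^{-1}$ on $\CC^*$ tends to $I$ as $z\to 0$, so it is bounded near $0$ and hence, by Riemann's removable singularity theorem, extends holomorphically over $0$; in particular it is bounded by some $C_0\ge 1$ on a disc $\{|z|\le\epsilon_0\}$. Writing $f(zt_0)=(f(zt_0)f(z)^{-1})\,f(z)$ and using submultiplicativity of the operator norm, $\|f(zt_0)\|\le C_0\|f(z)\|$ for $0<|z|\le\epsilon_0$, and after $n$ iterations $\|f(zt_0^{\,n})\|\le C_0^{\,n}\|f(z)\|$. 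Since every $w$ with $0<|w|\le\epsilon_0$ can be written $w=zt_0^{\,n}$ with $n\ge 0$ and $z$ in the fixed compact annulus $\{|t_0|\epsilon_0\le|z|\le\epsilon_0\}$, on which $\|f\|$ is bounded, one obtains a polynomial estimate $\|f(w)\|\le C|w|^{-\alpha}$ near $0$ with $\alpha=\log C_0/\log(1/|t_0|)$. Thus each entry of $f$ is polynomially bounded near $0$, hence extends meromorphically there, and then $f^{-1}=(\det f)^{-1}\,\mathrm{adj}(f)$ is meromorphic at $0$ as well (note $\det f$ is a nonzero meromorphic germ, being a polynomial in the entries of $f$ that is not identically zero).

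\emph{Step 2: Laurent coefficients.} Write $f(z)=\sum_n A_n z^n$ and $f(z)^{-1}=\sum_m B_m z^m$, each with only finitely many nonzero coefficients of negative index. Then $f(zt)=\sum_n A_n t^n z^n$, so $f(zt)f(z)^{-1}=\sum_\ell C_\ell(t)\,z^\ell$ with $C_\ell(t)=\sum_{n+m=\ell}A_n t^n B_m$ a finite Laurent polynomial in $t$. By Step 1 together with the hypothesis, $z\mapsto f(zt)f(z)^{-1}$ is holomorphic at $0$ with value $I$; hence $C_\ell\equiv 0$ for $\ell<0$ and $C_0\equiv I$. Comparing the coefficient of $t^0$ in $C_0\equiv I$ gives $A_0B_0=I$, so $A_0\in GL_k(\CC)$ and $B_0=A_0^{-1}$; comparing the coefficient of $t^{-j}$ in $C_{-j}\equiv 0$ for each $j\ge 1$ gives $A_{-j}B_0=0$, hence $A_{-j}=0$. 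Therefore $f(z)=\sum_{n\ge 0}A_n z^n$ is holomorphic at $0$ with $f(0)=A_0\in GL_k(\CC)$, and since $f(z)\in G$ for $z\ne 0$ and $G$ is closed in $GL_k(\CC)$, the limit $f(0)$ lies in $G$.

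I expect the only genuinely nontrivial point to be the growth estimate of Step 1 — that is, ruling out an essential singularity of $f$ (and hence of $f^{-1}$) at $0$; once meromorphy is established, the rest is a formal manipulation of Laurent coefficients. It is worth emphasising that noncommutativity of $GL_k(\CC)$ causes no difficulty, because the coefficients of $t^0$ and of $t^{-j}$ in $C_\ell(t)$ single out the individual matrix products $A_0B_0$ and $A_{-j}B_0$.
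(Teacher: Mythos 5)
Your proof is correct, and it takes a genuinely different route from the paper's at the key step. The paper works directly with the full two-sided Laurent expansions of $f$ and $f^{-1}$ on $\CC^*$: from the vanishing of the negative part of $f(zt)f(z)^{-1}$ it extracts the relations $C_jA_m=0$ for $j<-m$, and then uses a linear-algebra argument (the column spaces of finitely many $A_m$ already span $\CC^k$ because $f^{-1}$ is invertible) to kill all sufficiently negative coefficients of $f$; it finishes by factoring out the leading power $z^{n_0}$, using the limit condition to force $n_0=0$, and a separate determinant argument for invertibility of $f(0)$. You instead establish meromorphy of $f$ at $0$ analytically, by iterating the bound $\|f(zt_0)\|\le C_0\|f(z)\|$ to get a polynomial growth estimate $\|f(w)\|\le C|w|^{-\alpha}$ (your iteration and the covering of the punctured disc by rescaled annuli both check out), whence $f$ and $f^{-1}$ are meromorphic at $0$; after that, each coefficient $C_\ell(t)$ is a \emph{finite} Laurent polynomial in $t$, and the identities $A_0B_0=I$ and $A_{-j}B_0=0$ fall out by comparing single monomials, giving holomorphy and invertibility of $f(0)$ in one stroke. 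What your approach buys is that all the Laurent manipulations become finite sums, sidestepping the convergence and rearrangement issues implicit in the paper's doubly infinite identity $\sum_m C_{n-m}A_mt^{-m}=0$, and merging the paper's three closing steps (finitely many negative terms, $n_0=0$, $\det f(0)\neq 0$) into one coefficient computation; the cost is the preliminary growth estimate, which the paper avoids by its algebraic span argument. Both proofs use closedness of $G$ only at the very end, and neither uses commutativity, as you correctly note.
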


\begin{proof}
Let \begin{equation}\label{laurentf} f(z) =
\sum_{n=-\infty}^{\infty} C_n z^n
\end{equation}
be the Laurent series representation of $f$ for $z \in
\CC^{\ast}$, where each $C_n \in M_k(\CC)$. Note that $f(\cdot)^{-1}:
\CC^* \to G$ is also holomorphic. Let its Laurent series be
\begin{equation}\label{laurentfinv} f(z)^{-1} =
\sum_{n=-\infty}^{\infty} A_n z^n
\end{equation}
where each $A_n \in M_k(\CC)$. Then we have
 \begin{equation} f(zt) f(z)^{-1} = \sum_{n=-\infty}^{\infty} z^n t^n ( \sum_{m=-\infty}^{\infty}
 C_{n-m} A_{m} t^{-m} ).
\end{equation}
As the limit $\lim_{z \to 0} f(zt)f(z)^{-1}$ exists by assumption,
 the Riemann extension theorem shows that $ f(zt) f(z)^{-1} $ is
holomorphic at $z= 0$ for every $t \in \CC^*$.
Therefore,
\begin{equation}\label{eq:cn1}
\sum_{m=-\infty}^{\infty} C_{n-m} A_{m} t^{-m} = 0
\end{equation}
for every $n<0$ and every $t \in \CC^*$. This implies that
\begin{equation}\label{eq:cn2}
 C_{n-m} A_{m} = 0
\end{equation}
for every $n<0$ and every $m$. Setting $j=n-m$ and varying $n$, we
obtain \begin{equation}\label{eq:cn3}
 C_{j} A_{m} = 0
\end{equation}
for every $j<- m$ for every $m$.

 Let $S$ be the set of all column
vectors of the matrices $A_m$, $m \in \ZZ$. Since every subspace
of $\CC^k$ is closed, the span of $S$ is closed in $\CC^k$.
Therefore the columns of $f(z)^{-1}$ belong to the span of $S$.
 Since $f(z)^{-1} \in GL_k(\CC)$, the span of $S$
  must equal $\CC^k$. Therefore there exists a finite set of
 values of $m$, say $m_1,\ldots, m_p$, such that the column spaces of the corresponding
 $A_{m}$'s span $\CC^k$. Let $m_0= {\rm min} \{-m_1, \ldots,-m_p \}
 $.  Consider any $j<  m_0 $. Then $j<- m_i$ for each $1\le i\le p$.
 Hence by equation \eqref{eq:cn3}
 $C_j A_{m_i} = 0$ for  each $1\le i\le p$. Hence $C_j = 0$.

 Assume that $n_0$ is the minimum value of $n$ such that $C_{n_0}$
 is nonzero. Then $f(z)= z^{n_0} \phi(z)$ where $\phi: \CC \to GL_k(\CC)$
 is holomorphic near zero. Then
 \begin{equation}\label{eq:asm} I = \lim_{z\to 0} f(zt)f(z)^{-1} = \lim_{z\to 0} \phi(zt)\phi(z)^{-1} t^{n_0}.
 \end{equation}
Since $\ds{\lim_{z\to 0} \phi(z) = C_{n_0}} $ and $\ds{\lim_{z\to
0} \phi(tz)= C_{n_0} }$, multiplying both sides of \eqref{eq:asm} by $\lim_{z \to 0} \phi(z) $ yields
 \begin{equation}  C_{n_0} =  \lim_{z\to 0} \phi(z) = \lim_{z\to 0} \phi(zt) t^{n_0}  = C_{n_0} t^{n_0}
 \end{equation}
 for every $t \in \CC^{*}$.
  Therefore we must have $n_0 =
 0$. Hence $f$ is holomorphic at $0$ and $f(0) =C_0$.

 Let $g(z) = \det(f(z))$. It follows that $g:\CC \to \CC $ is
 holomorphic and $\ds{\lim_{z\to 0} g(zt) g(z)^{-1} = 1}$. If $g$ has a
 zero of order $n$ at zero, we get $\lim_{z\to 0} g(zt) g(z)^{-1}
 = t^n$. Therefore $n=0$ and $g(0) \neq 0$. Hence $f(0)$ is nonsingular.
 Thus $f$ defines a holomorphic map $f:\CC \to GL_k$.

Moreover if $G$ is closed in $GL_k(\CC)$, it is evident that $f(0)
\in G$ so that $f$ defines a holomorphic map $f: \CC \to G $.
\end{proof}

\begin{theorem}\label{thm:homo} Consider the standard action of $T$ on $ \CC^n$.
Suppose $\mathcal{E}$ is a holomorphic $T$-equivariant
 principal  $G$-bundle over
$ \CC^n$ where $G$ is a closed Abelian subgroup of $GL_k(\CC)$.
Then there exists a holomorphic section $s'$ of $\mathcal{E}$ such
that $\rho_{s'}(x,\cdot): T \to G$ equals $\rho_{s'}(0,\cdot)$  for any
$x$ in $\CC^n$.
\end{theorem}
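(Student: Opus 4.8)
Start with any holomorphic section $s$ of $\mathcal{E}$ (which exists by Oka-Grauert, since $\CC^n$ is Stein and contractible, and $\mathcal{E}$ is therefore trivial). The local action function $\rho_s(x,\cdot): T \to G$ need not be a homomorphism, but by Lemma~\ref{lem:rho4} it is completely determined by its restriction $\rho_s(x_0,\cdot)$ at a single point $x_0$ in the principal orbit $O$. So the entire problem is to modify $s$ to a new section $s'$ so that $\rho_{s'}(x_0,\cdot)$ is a group homomorphism; then Lemma~\ref{lem:rho5} automatically upgrades this to the conclusion that $\rho_{s'}(x,\cdot)$ is independent of $x$, and equals $\rho_{s'}(0,\cdot)$ in particular. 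By the cocycle relation in Lemma~\ref{lem:rho3}, $\rho_s(x_0,\cdot)$ fails to be a homomorphism precisely by the discrepancy between $\rho_s(tx_0, u)$ and $\rho_s(x_0,u)$, so the real content is to kill the $x$-dependence.

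\textbf{The key reduction.} Using coordinates $x = (x_1,\dots,x_n)$ on $\CC^n$ and the standard $T = (\CC^*)^n$ action, I would try to remove the dependence of $\rho_s$ on each coordinate $x_i$ one at a time, by pre-composing $s$ with suitable gauge transformations $\gamma: \CC^n \to G$ via $s' = s \cdot \gamma$ and the transformation rule \eqref{eq:rho2}. Fix a coordinate direction, say $x_n$, and restrict attention to the one-parameter subgroup $\lambda(z)$ acting only on that coordinate. The function $z \mapsto \rho_s(x_0, \lambda(z))$ is a holomorphic map $\CC^* \to G$ (here $x_0$ is chosen with all coordinates nonzero), and the cocycle identity of Lemma~\ref{lem:rho3} together with the continuity statement \eqref{eq:rho5} shows it satisfies exactly the hypothesis of Proposition~\ref{prop:lcC}: $\lim_{z\to 0}\rho_s(x_0,\lambda(z)t)\rho_s(x_0,\lambda(z))^{-1}$ exists (it equals $\rho_s$ evaluated at the limit point, which lies on a smaller orbit). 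Hence this map extends holomorphically across $z = 0$ to a map $f_n: \CC \to G$. Now I would use $f_n$ — or rather, a function built from it that depends holomorphically on $x_n$ — as (part of) the gauge transformation $\gamma$: concretely, define $\gamma$ so that $\gamma(x)$ in the $x_n$-direction cancels the "Laurent tail" of $\rho_s(x,\cdot)$ in $x_n$. The crucial point where \emph{Abelian}-ness of $G$ enters is that only then does the gauge rule \eqref{eq:rho2} simplify enough that the correction in the $x_n$-direction does not reintroduce dependence on the previously-cleared directions $x_1,\dots,x_{n-1}$; this is precisely the place the authors flag ("the proof of Theorem~\ref{thm:homo} relies heavily on this assumption").

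\textbf{Carrying out the induction.} Proceed by induction on the number of coordinates on which $\rho_s$ still depends. Having arranged, by the above, that the gauged section $s'$ has $\rho_{s'}(x,\cdot)$ independent of $x_n$, observe that $\rho_{s'}$ now restricts to a local action function for the induced $T$-equivariant bundle on the lower-dimensional $\CC^{n-1} = \{x_n = 1\}$ (or one can argue directly via Lemma~\ref{lem:equiv} with the obvious $T$-equivariant inclusion), and repeat. After $n$ steps we obtain a section $s'$ with $\rho_{s'}(x_0,\cdot)$ independent of $x_0 \in O$, hence a group homomorphism by Lemma~\ref{lem:rho3}, hence $\rho_{s'}(x,\cdot) = \rho_{s'}(0,\cdot)$ for all $x$ by Lemma~\ref{lem:rho5}.

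\textbf{The main obstacle.} The delicate step is constructing the gauge transformation $\gamma$ in each stage so that (i) it is globally holomorphic on all of $\CC^n$, not just on the open dense torus where $\rho_s$ was originally analyzed, (ii) applying it actually removes the $x_i$-dependence rather than merely shifting it, and (iii) it commutes — in the sense of not interfering — with the corrections already made. Point (ii) is where Proposition~\ref{prop:lcC} does the real work (it guarantees the "tail" to be cancelled is a genuine holomorphic object across the coordinate hyperplane), and point (iii) is where commutativity of $G$ is indispensable: in the Abelian case the conjugation in \eqref{eq:rho2} becomes trivial, so $\rho_{s'}(x,t) = \gamma(tx)^{-1}\rho_s(x,t)\gamma(x) = \gamma(tx)^{-1}\gamma(x)\,\rho_s(x,t)$, and one needs only arrange $\gamma(tx)^{-1}\gamma(x)$ to supply the right correction factor as $t$ ranges over the relevant one-parameter subgroup — a manageable task once the extended maps $f_i$ are in hand. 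I expect the bookkeeping of exactly which $\gamma$ to write down, and verifying its holomorphy and its effect simultaneously in all coordinates, to be the technical heart of the argument.
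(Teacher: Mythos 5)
Your overall strategy --- gauge-transform $s$ so that $\rho_{s'}(x_0,\cdot)$ becomes a homomorphism and then invoke Lemma \ref{lem:rho5} --- is the right one and matches the paper's. But the key technical step fails as you state it. You propose to apply Proposition \ref{prop:lcC} directly to $z \mapsto \rho_s(x_0,\lambda(z))$, arguing that $\lim_{z\to 0}\rho_s(x_0,\lambda(zt))\,\rho_s(x_0,\lambda(z))^{-1}$ \emph{exists} (it equals $\rho_s$ at the limit point on the smaller orbit). That is not the hypothesis of Proposition \ref{prop:lcC}: the proposition requires the limit to equal $I$, and this is used essentially in its proof --- mere existence of the limit only truncates the Laurent tail below some finite order, while the condition ``limit $=I$'' is exactly what forces the leading exponent $n_0$ to vanish. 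Moreover the conclusion genuinely fails for your $f$: take $\mathcal{E}=\CC^n\times \CC^*$ with $G=\CC^*\subset GL_1(\CC)$ and $t\cdot(x,v)=(tx,\,t_1^{a_1}\cdots t_n^{a_n}v)$; with the obvious section, $\rho_s(x_0,\lambda(z))=z^{a_n}$, which does not extend to a $G$-valued map across $z=0$ unless $a_n=0$. So the object you feed into Proposition \ref{prop:lcC} is the wrong one.

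The paper's fix is to first split off the homomorphism sitting at the fixed point: since $0$ is $T$-fixed, $\rho_s(0,\cdot):T\to G$ is automatically a homomorphism, and one sets $F(t)=\rho_s(0,t)^{-1}\rho_s(x_0,t)$. It is $F$, not $\rho_s(x_0,\cdot)$, that gets extended over $\CC^n$ (one coordinate hyperplane at a time), and even for $F$ one must still verify the hypothesis $\lim = I$: this amounts to proving that $w\mapsto\rho_s(w,\iota(t_k))$ is \emph{constant} on the hyperplane $\{z_k=0\}$ and equal to $\rho_s(0,\iota(t_k))$ --- a separate computation using Lemma \ref{lem:rho3}, density of a partial orbit in the hyperplane, and commutativity of $G$, which is absent from your sketch and is precisely where Abelianness enters (not, as you suggest, mainly in the gauge rule \eqref{eq:rho2}). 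Once $F$ is extended, a single gauge transformation $s'(zx_0)=s(zx_0)\cdot F(z)$ finishes the proof in one stroke, since $\rho_{s'}(x_0,t)=F(t)^{-1}\rho_s(x_0,t)=F(t)^{-1}\rho_s(0,t)F(t)=\rho_s(0,t)$; the coordinate-by-coordinate sequence of gauge transformations whose ``bookkeeping'' you defer is not needed, and as described it would start from a function that does not extend.
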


\begin{proof} Let $s$ be a holomorphic section of $\mathcal{E}$ over $\CC^n$.
  Fix a point $x_0 $ in $ O = T$. Note that $\rho_s(0,\cdot): T \to G$ is a
  group homomorphism as the origin $0$ is fixed by $T$. Define a holomorphic function $F :T \to G$ by
  \begin{equation}\label{eq:F} F(t) = \rho_s(0, t)^{-1} \rho_s(x_0, t ).
  \end{equation}
  We may therefore  write
 \begin{equation} \label{eq:F2} \rho_s(x_0, t )  =  \rho_s(0, t) F(t).
  \end{equation}

For any $y, x \in \CC^n$, define $yx$ to be coordinate-wise
multiplication of $y$ and $x$. Let $z$ denote an element of $T$.
   For any $y \in \CC^n$,
  $$  \begin{array}{l} \rho_s(yx_0,t) = \lim_{z\to y} \rho_s(zx_0, t) =  \lim_{z\to y} \rho_s(x_0, tz) \rho_s(x_0,
  z)^{-1}\\
     =  \lim_{z\to y} \rho_s(0, tz) F(tz)F(z)^{-1} \rho_s(0,
  z)^{-1}.
 \end{array} $$
 Therefore,
 \begin{equation}\label{eq:lim}  \lim_{z\to y}  F(tz)F(z)^{-1} = \lim_{z\to y}  \rho_s (0, tz)^{-1}  \rho_s(yx_0,t) \rho_s (0, z)
 = \rho_s(yx_0,t) \rho_s (0, t)^{-1},
 \end{equation}
 where the last equality follows from the assumption that $G$ is
 Abelian, and the fact that $\rho_s(0,\cdot)$ is a homomorphism.

Define $P_k = \CC^n - \cup_{i=k}^n \{z_i =0 \}$ where $1 \le k \le
n$. Note that $P_1 = (\CC^*)^n$ and $P_n = \CC^n$. We will now show by induction
over $k$ that $F$ admits a holomorphic $G$-valued extension over $\CC^n$. Note
that  $F$ admits an extension over $P_1$ by definition. Now
assume that $F$ has a $G$-valued holomorphic extension over $P_k$. Take any point $p=(p_1, \ldots,
p_n)\in P_k$. Note that $tp \in P_k$ for any $t \in T$. Setting
$y= p $ in \eqref{eq:lim}, we get
\begin{equation}\label{eq:limp}    F(tp)F(p)^{-1}
 = \rho_s(px_0,t) \rho_s (0, t)^{-1}.
 \end{equation}
 Let  $\pi_k: \CC^n \to \{
z_k = 0\}$ be the standard projection. Taking limit of
\eqref{eq:limp} as $p_k \to 0$, we get
\begin{equation}\label{eq:limq}   \lim_{p_k \to 0} F(tp)F(p)^{-1}
 = \rho_s( \pi_k(p)  x_0,t) \rho_s (0, t)^{-1}.
 \end{equation}

Let $t_k \in \CC^*$. Write $\iota(t_k)$ for $(1, \ldots,1,
t_k, 1, \ldots, 1)$ where $t_k $ occupies the $k$-th position.

In the case $p_i \neq 0$ for each $i \neq k$,
$t(p)=(p_1, \ldots, p_{k-1}, 1, p_{k+1}, \ldots, p_n)$ defines
an element of $T$.
 Then by Lemma \ref{lem:rho3}, and
using $G$ is Abelian, we have
\begin{equation} \begin{array}{l}
\rho_s(\pi_k(p) x_0, \iota(t_k))\\
= \rho_s(t(p)\pi_k(x_0),\iota(t_k) )\\
= \rho_s( \pi_k(x_0), \iota(t_k) t(p))\, \rho_s(\pi_k(x_0),t(p))^{-1} \\
= \rho_s(\pi_k(x_0), t(p) \iota(t_k) ) \, \rho_s(\pi_k(x_0),t(p))^{-1} \\
= \rho_s(\iota(t_k) \pi_k(x_0) , t(p))\, \rho_s(\pi_k(x_0), \iota(t_k)) \, \rho_s(\pi_k(x_0), t(p))^{-1} \\
= \rho_s(\pi_k(x_0), t(p)) \, \rho_s(\pi_k(x_0), \iota(t_k))\, \rho_s(\pi_k(x_0), t(p))^{-1} \\
= \rho_s(\pi_k(x_0), \iota(t_k)).
\end{array}
\end{equation}
The set $\{\pi_k(p) x_0 |   p_i \neq 0 \, \forall i\neq k \}$
is dense in the hyperplane $z_k = 0$. Hence
 by holomorphicity of $\rho_s(w, \iota(t_k))$ in $w$, we obtain
$\rho_s(w, \iota(t_k))= \rho_s(\pi_k(x_0), \iota(t_k)) $ for every $w$ in
the hyperplane $z_k=0$. Therefore $\rho_s(w, \iota(t_k))$ is constant for
 $w \in \{z_k=0\}$ and equals $\rho_s(0,\iota(t_k)) $.
 In particular,
 \begin{equation}\label{eq:cons} \rho_s(\pi_k(p)x_0, \iota(t_k))=
\rho_s(0,\iota(t_k)) \end{equation}
for any $p$ in $P_k$, since $ \pi_k(p)x_0 \in \{z_k=0\}$.

Fix $p_i$ for each $i\neq k$, and define
\begin{equation}\label{eq:limq2} f(p_k)= F(p_1, \ldots, p_k,
\ldots, p_n).\end{equation}
 Then setting $t =\iota(t_k) $  in \eqref{eq:limq}, we get
\begin{equation}\label{eq:limf}  \lim_{p_k \to 0} f(t_k p_k) f(p_k)^{-1}=
   \rho_s( \pi_k(p) x_0, \iota(t_k)) \,
\rho_s(0,\iota(t_k))^{-1}. \end{equation}
Therefore by \eqref{eq:cons},
$$ \lim_{p_k \to 0} f(t_k p_k) f(p_k)^{-1}= I$$ for any $t_k \in \CC^*$.
 Therefore Proposition \ref{prop:lcC} applies to $f$. Hence
$$\lim_{p_{k} \to 0} F(p_1,\ldots, p_n) = \lim_{p_k \to 0} f(p_k)$$ exists and takes value in $G$.
 As this argument works for every $p \in P_k$, by the Riemann extension theorem,
$F$ has a unique $G$-valued extension over $P_{k+1}= \CC^n - \cup_{i=k+1}^n
\{z_i =0 \}$. Therefore, by induction, $F$ admits a unique $G$-valued holomorphic extension
over $\CC^n$.

Now define a new section $s'$ of $\mathcal{E} $ by
\begin{equation}
s'(zx_0) = s(zx_0)\cdot F(z)
\end{equation}
for every $z\in \CC^n$.
Note that $F(I) = I$ as $\rho_s(x_0,I) = I = \rho_s(0,I)$. Then
\begin{equation}
ts'(x_0) = t (s(x_0)\cdot F(I))  = t s(x_0)=  s(tx_0)\cdot
\rho_s(x_0,t).
\end{equation}
On the other hand,
\begin{equation}
ts'(x_0) = s'(tx_0) \cdot \rho_{s'}(x_0,t) = s(tx_0)\cdot F(t)
\rho_{s'}(x_0,t).
\end{equation}
Therefore, using \eqref{eq:F2}, we have
\begin{equation}
\rho_{s'}(x_0,t) = F(t)^{-1} \rho_s(x_0,t)  = F(t)^{-1}
\rho_s(0,t) F(t) = \rho_s(0,t).
\end{equation}
Therefore $\rho_{s'}(x_0,t)$ is a homomorphism and the theorem
follows from Lemma \ref{lem:rho5}.
\end{proof}

\begin{remark}\label{rem:indep} Note that the homomorphism $\rho_s(0,\cdot)$ is
independent of the choice of the section $s$ by \eqref{eq:rho2}
when $G$ is Abelian.
\end{remark}

\begin{lemma}\label{lem:cone1} Let $G$ be a closed Abelian subgroup
 of $GL_k(\CC)$. Suppose $\sigma$ is any cone of an $n$-dimensional
nonsingular fan $\Xi$ and $\mathcal{E}$ a $T$-equivariant
holomorphic principal $G$-bundle on the affine toric variety
$X_{\sigma}$. Then $\mathcal{E} $ is trivial and admits a section
$s^*$ for which the local action function $\rho_{s^*}$ is a
homomorphism. Moreover, there exists a canonical homomorphism
$\rho_{\sigma}: T_{\sigma} \to G$ and a choice of ${s^*}$ such
that $\rho_{s^*}(t) = \rho_{\sigma}(\pi_{\sigma}(t) )$, where
$\pi_{\sigma}: T \to T_{\sigma}$ is the projection associated to
the decomposition $T \cong T_{\sigma} \times O_{\sigma}$.
\end{lemma}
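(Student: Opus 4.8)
The plan is to reduce to Theorem \ref{thm:homo} by exploiting the explicit structure of $X_\sigma$ for a nonsingular cone, and then to propagate the section it produces over all of $X_\sigma$ along the orbit $O_\sigma$.

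First I would set $d=\dim\sigma$ and use nonsingularity: a $\ZZ$-basis of $N$ extending a basis of $N\cap(\mathrm{span}\,\sigma)$ induces a lattice automorphism, hence an automorphism $a$ of $T$ and an isomorphism $\alpha\colon X_\sigma\to \CC^d\times(\CC^*)^{n-d}$ with $\alpha\circ t=a(t)\circ\alpha$, under which the action becomes the standard coordinatewise one, $T_\sigma$ becomes $(\CC^*)^d\times\{\mathbf 1\}$ acting standardly on the $\CC^d$ factor, $O_\sigma$ becomes $\{\mathbf 1\}\times(\CC^*)^{n-d}$, and $\pi_\sigma$ becomes projection onto the first factor (up to the identification $T_\sigma\cong(\CC^*)^d$). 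By Lemma \ref{lem:equiv}, any section and local action homomorphism built on the model $\CC^d\times(\CC^*)^{n-d}$ pulls back via $(\alpha,a)$ to $\mathcal E$ with $\rho_{s^*}(x,t)=\rho_{s'^*}(\alpha(x),a(t))$, so it suffices to work with $X=\CC^d\times(\CC^*)^{n-d}$ carrying the standard $T$-action.

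Next I would restrict $\mathcal E$ to $\CC^d\times\{\mathbf 1\}$, which is invariant under $T_\sigma\cong(\CC^*)^d$ acting standardly on $\CC^d$, and apply Theorem \ref{thm:homo} in dimension $d$: there is a holomorphic section $s_0$ of $\mathcal E|_{\CC^d\times\{\mathbf 1\}}$ for which $\rho_{s_0}(x,\cdot)\colon T_\sigma\to G$ is independent of $x$; call it $\rho_\sigma$, a holomorphic group homomorphism. I would then extend $s_0$ over all of $X$ using the free $O_\sigma$-action in the last $n-d$ coordinates, setting $s^*(x,z):=(\mathbf 1,z)\cdot s_0(x,\mathbf 1)$. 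This is well-defined and holomorphic since $(x,z)\mapsto\big((\mathbf 1,z),s_0(x,\mathbf 1)\big)$ is holomorphic and the action map is holomorphic, and $\pi\circ s^*=\mathrm{id}$; in particular $\mathcal E$ admits a global holomorphic section and hence is trivial. For $t\in T$ write $t=\pi_\sigma(t)\cdot u$ with $u\in O_\sigma$; a short computation using Lemma \ref{lem:rho3}, the commutativity of the $T$- and $G$-actions, and the section property of $s_0$ gives $t\cdot s^*(x,z)=s^*\big(t\cdot(x,z)\big)\cdot\rho_\sigma(\pi_\sigma(t))$, i.e. $\rho_{s^*}\big((x,z),t\big)=\rho_\sigma(\pi_\sigma(t))$ for all $(x,z)$ and $t$. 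Since $\rho_\sigma\circ\pi_\sigma$ is a composition of group homomorphisms, $\rho_{s^*}$ is a homomorphism $T\to G$; transporting back by Lemma \ref{lem:equiv} yields the first two assertions.

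For canonicity of $\rho_\sigma$, I would use the distinguished point $x_\sigma\in O_\sigma$ of $X_\sigma$, which is fixed by $T_\sigma$. For any holomorphic section $s$ of $\mathcal E$, the map $\rho_s(x_\sigma,\cdot)|_{T_\sigma}\colon T_\sigma\to G$ is a homomorphism by \eqref{eq:rho4}, and by \eqref{eq:rho2}, the relation $t\,x_\sigma=x_\sigma$ for $t\in T_\sigma$, and the commutativity of $G$, this restriction is unchanged when $s$ is replaced by $s\cdot\gamma$; hence $\rho_\sigma:=\rho_s(x_\sigma,\cdot)|_{T_\sigma}$ is independent of $s$, i.e. canonical. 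Since $\pi_\sigma$ restricts to the identity on $T_\sigma$, this agrees with the homomorphism produced above, so $\rho_{s^*}(t)=\rho_\sigma(\pi_\sigma(t))$. I expect the main friction to lie not in any single step but in the bookkeeping of the equivariant identification ($a$, $\pi_\sigma$, and the decomposition $T\cong T_\sigma\times O_\sigma$) and in checking that the extension along $O_\sigma$ leaves $\rho_{s^*}$ a homomorphism; the conceptual point that makes this work is that $T_\sigma$ acts trivially on $O_\sigma$, so extending the section along $O_\sigma$ does not disturb the $T_\sigma$-equivariance recorded by $\rho_\sigma$.
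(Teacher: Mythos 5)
Your proposal is correct and follows essentially the same route as the paper: reduce via the lattice/equivariance isomorphism to the model $\CC^d\times(\CC^*)^{n-d}$, apply Theorem \ref{thm:homo} to the restriction over a slice $\CC^d\times\{pt\}$, extend the resulting section along the $O_\sigma$-action (the paper's equation \eqref{eq:extn} is exactly your $s^*(x,z)=(\mathbf 1,z)\cdot s_0(x,\mathbf 1)$), and verify $\rho_{s^*}=\rho_\sigma\circ\pi_\sigma$. The only cosmetic difference is that you obtain canonicity of $\rho_\sigma$ by evaluating at the distinguished point of $O_\sigma$ and invoking \eqref{eq:rho2} directly, whereas the paper packages the same observation as Remark \ref{rem:indep} together with a check that the choice of slice $y_0$ is immaterial.
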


\begin{proof} Let $d$ denote the dimension of the cone $\sigma$.
Note that there exists an isomorphism $ \alpha : X_{\sigma} \to
\CC^{d} \times (\CC^*)^{n-d}$ where the latter space has the
standard $T$-action, such that $\alpha$ is equivariant up to an
automorphism $a_{\sigma}$ of $T$:
$$ \alpha(tx)= a_{\sigma}(t) \alpha(x).$$
Define $H= a_{\sigma}(T_{\sigma})$ and $K = a_{\sigma}(O_{\sigma}) $.
Note $H \cong (\CC^*)^d$ and $K \cong (\CC^*)^{n-d}$.


Let $\mathcal{F}$ be the pull-back of $\mathcal{E}$   with respect
to $\alpha^{-1}$. Let $\phi: \mathcal{E} \to \mathcal{F}$ be the
natural isomorphism. Note that $\mathcal{F} $ inherits natural
actions of $T$ and $G$ that satisfy
$$ \phi(t e) = a_{\sigma}(t) \phi(e), \quad \phi(e\cdot g) = \phi(e) \cdot g$$
for every $e \in \mathcal{E}$. This makes $\mathcal{F} $ a
$T$-equivariant principal $G$-bundle over $\CC^{d} \times
(\CC^*)^{n-d} $.

 Fix a point $y_0$ in $(\CC^{*})^{n-d}$. Then by
Oka-Grauert theory $\mathcal{F}$ is trivial on $\CC^{d} \times
\{y_0\} $. Let $s$ be a section of this restricted bundle. We
extend $s$ to a section of $\mathcal{F}$ over $ \CC^{d} \times
(\CC^*)^{n-d}$ by defining
\begin{equation}\label{eq:extn}
 s(x,k y_0)= k s(x,y_0)
\end{equation}
for every $x\in \CC^d$ and $k \in K$. This shows that
$\mathcal{F}$, and consequently $\mathcal{E}$, is trivial.

By Theorem \ref{thm:homo} and Remark \ref{rem:indep} we may assume
that the local action function  of the section $s$ over $\CC^{d}
\times \{y_0\}$ satisfies \begin{equation} \rho_s((x,y_0),h) =
\rho_s((0,y_0),h)
\end{equation} for all $h \in H$, and defines a
homomorphism $ H \to G$ that is independent of $s$.

Since
\begin{equation} \begin{array}{l}
h s(x,k y_0) = hk s(x,y_0) = kh s(x, y_0) \\
 = k s(hx,y_0) \cdot
\rho_s((x,y_0),h) = s(hx,ky_0) \cdot \rho_s((x,y_0),h),
\end{array} \end{equation}
we deduce that
\begin{equation}
\rho_{s}((x,ky_0),h) = \rho_s((x,y_0),h)
\end{equation}
for every $k\in K$. This shows that the homomorphism $\rho_s : H
\to G$ is  independent of the choice of $y_0$ as well. Recall that
$O_{\sigma} \cong (\CC^*)^{n-d}$ and $a_{\sigma}(T_{\sigma}) = H$.
We define $\rho_{\sigma}: T_{\sigma} \to G$ by
\begin{equation}\label{eq:rhosigma} \rho_{\sigma} (t) =
\rho_s((0,y_0),a_{\sigma}(t)).
\end{equation}

It follows easily from \eqref{eq:extn} that $ks(x,y) = s(x,ky)$
for any $y \in (\CC^*)^{n-d}$. Then,
\begin{equation} \begin{array}{l}
hk s(x,y) = h s(x,ky) = s(hx, ky) \cdot \rho_s((x,ky),h) \\
= s(hx, ky) \cdot \rho_s((x,y_0),h) = s(hx, ky) \cdot
\rho_s((0,y_0),h) \end{array}
\end{equation}
for every $(h,k) \in H \times K$ and $(x,y) \in \CC^d \times
(\CC^*)^{n-d}$. Therefore,
\begin{equation}\label{eq:abo}
\rho_s((x,y), hk ) = \rho_s((0,y_0), h).
\end{equation}
Then  $\rho_s((x,y), \cdot )$ is a homomorphism as
$\rho_s((0,y_0), \cdot) $ is so.

 We set $s^{*}= \phi^{-1}(s)$. Then by Lemma \ref{lem:equiv} and \eqref{eq:abo},
\begin{equation}\label{eq:abo2}
\rho_{s^*}(x, t ) = \rho_s((\alpha(x), a_{\sigma}(t)) =
\rho_s((0,y_0), pr(a_{\sigma}(t))),
\end{equation}
 where $pr: T \to H$ is the
projection corresponding to the decomposition $T = H \times K $.
Then by definition of $H$ and $K$ we have $ pr \circ a_{\sigma} =
a_{\sigma}\circ \pi_{\sigma}$. Therefore from \eqref{eq:abo2} and
\eqref{eq:rhosigma} we have
$$ \rho_{s^*}(x, t ) =  \rho_s((0,y_0), a_{\sigma}(\pi_{\sigma}(t)))= \rho_{\sigma}(\pi_{\sigma}(t) ).$$
\end{proof}


\section{Gluing condition}
The main idea of this section is borrowed from \cite{Kly}.

Let $X$ be a nonsingular toric variety of dimension $n$
corresponding to a fan $\Xi$.
 Suppose $\mathcal{E}$ is a
$T$-equivariant principal $G$-bundle over $X$ where $G$ is a
closed Abelian subgroup of $GL_k(\CC)$.

Let $\sigma $ be any maximal cone in $\Xi$.
 Let $\widetilde{\rho}_{\sigma} = \rho_{\sigma} \circ \pi_{\sigma} :T \to G$
  where $\rho_{\sigma}: T_{\sigma} \to G$ is the homomorphism obtained by
   applying Lemma \ref{lem:cone1} to the bundle
$\mathcal{E}_{\sigma} := \mathcal{E}|_{X_{\sigma}}$.  Let
$s_{\sigma}$ a section of $\mathcal{E}_{\sigma}$ whose local
action homomorphism is $\widetilde{\rho}_{\sigma}$.

 Let $\psi_{\sigma}: \mathcal{E}_{\sigma} \to
X_{\sigma} \times G $ be the trivialization induced by the section
$s_{\sigma}$,
$$  \psi_{\sigma} (s_{\sigma}(x) \cdot h ) = (x, h). $$
Note that
$$ \psi_{\sigma} (t s_{\sigma}(x) \cdot h ) = (tx, \widetilde{\rho}_{\sigma}(t) h).$$
So the $T$ action on the trivialization $X_{\sigma} \times G$ is
defined by $$t(x,h) = (tx, \widetilde{\rho}_{\sigma}(t)h ). $$

 Let $\sigma$, $\tau$ be any two maximal cones. Let
$\phi_{\tau \sigma}: X_{\sigma} \cap X_{\tau} \to G $ denote the
transition function defined as follows,  $$\psi_{\tau}
\psi_{\sigma}^{-1} (x,h) = (x, \phi_{\tau \sigma}(x) h). $$
 By equivariance, we  have
$$ t (\psi_{\tau} \psi_{\sigma}^{-1}(x, h)) = \psi_{\tau} \psi_{\sigma}^{-1} (t(x,h)).  $$
This implies that
$$ (tx, \widetilde{\rho}_{\tau}(t) \phi_{\tau \sigma}(x) h ) = (tx, \phi_{\tau \sigma}(tx)\widetilde{\rho}_{\sigma}(t)h ). $$
Therefore,
\begin{equation}
\phi_{\tau \sigma}(tx)= \widetilde{\rho}_{\tau}(t) \phi_{\tau
\sigma}(x) \widetilde{\rho}_{\sigma}(t)^{-1} = \phi_{\tau
\sigma}(x)
\widetilde{\rho}_{\tau}(t)\widetilde{\rho}_{\sigma}(t)^{-1}.
\end{equation}

Consider the point $x_0=(1, \ldots, 1)$ in the principal $T$-orbit
 $O$ of $X$. For any maximal cones $\tau$ and $\sigma$, we have
$$ \widetilde{\rho}_{\tau}(t) \widetilde{\rho}_{\sigma}(t)^{-1} =  \phi_{\tau \sigma}(x_0)^{-1} \phi_{\tau \sigma}(tx_0).$$
Therefore the function $\widetilde{\rho}_{\tau}
\widetilde{\rho}_{\sigma}^{-1} :T \to G$ admits a holomorphic
extension to a function from $X_{\tau} \cap X_{\sigma}$ to $G$,
namely $ \phi_{\tau \sigma}(x_0)^{-1} \phi_{\tau \sigma}(\cdot) $.

\begin{theorem}\label{thm:clas} Let $X$ be an $n$-dimensional nonsingular toric
variety with fan $\Xi$.  Let $G$ be a closed Abelian subgroup of
$GL_k(\CC)$. Then the isomorphism classes of $T$-equivariant
holomorphic principal $G$-bundles on $X$ are in one-to-one
correspondence with collections of holomorphic group homomorphisms
$\{\rho_{\sigma}: T_{\sigma} \to G\, |\, \sigma \,{\rm is \; a\;
maximal \; cone\; of \;} \Xi \}$ which satisfy the extension
condition: Each $ (\rho_{\tau} \circ \pi_{\tau}) (\rho_{\sigma}
\circ \pi_{\sigma})^{-1}$ extends to a $G$-valued holomorphic
function over $X_{\sigma} \cap X_{\tau}$.
\end{theorem}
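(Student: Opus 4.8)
The plan is to exhibit the stated correspondence explicitly and check it is a well-defined bijection. In one direction, to a $T$-equivariant holomorphic principal $G$-bundle $\mathcal{E}$ assign the collection $\{\rho_\sigma\}$ in which $\rho_\sigma\colon T_\sigma\to G$ is the canonical homomorphism that Lemma \ref{lem:cone1} produces from $\mathcal{E}|_{X_\sigma}$. I would first verify this is well defined on isomorphism classes: an isomorphism $\mathcal{E}\to\mathcal{E}'$ of $T$-equivariant $G$-bundles restricts over each maximal cone $\sigma$ to an isomorphism $\mathcal{E}|_{X_\sigma}\to\mathcal{E}'|_{X_\sigma}$, and Lemma \ref{lem:equiv} applied with $\alpha$ and $a$ equal to the identity (together with Remark \ref{rem:indep}, which records that the homomorphism extracted in Lemma \ref{lem:cone1} does not depend on the auxiliary section) shows it sends a section realizing $\widetilde{\rho}_\sigma=\rho_\sigma\circ\pi_\sigma$ for $\mathcal{E}$ to one realizing it for $\mathcal{E}'$, so the two collections coincide. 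That the resulting collection satisfies the extension condition is exactly the computation in the paragraph preceding the theorem: if $\phi_{\tau\sigma}$ is the transition function between the trivializations induced by sections with local action homomorphisms $\widetilde{\rho}_\sigma$ and $\widetilde{\rho}_\tau$, then $x\mapsto\phi_{\tau\sigma}(x_0)^{-1}\phi_{\tau\sigma}(x)$ is a holomorphic $G$-valued extension of $\widetilde{\rho}_\tau\widetilde{\rho}_\sigma^{-1}$ over $X_\sigma\cap X_\tau$.

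For injectivity, suppose $\mathcal{E}$ and $\mathcal{E}'$ give rise to the same collection $\{\rho_\sigma\}$. Over each maximal cone choose sections of $\mathcal{E}|_{X_\sigma}$ and $\mathcal{E}'|_{X_\sigma}$ with local action homomorphism $\widetilde{\rho}_\sigma=\rho_\sigma\circ\pi_\sigma$ and pass to the induced trivializations $\psi_\sigma,\psi'_\sigma$, in each of which $T$ acts by $t(x,h)=(tx,\widetilde{\rho}_\sigma(t)h)$. The transition functions $\phi_{\tau\sigma},\phi'_{\tau\sigma}$ then obey the same twisted equivariance, so $x\mapsto\phi'_{\tau\sigma}(x)\phi_{\tau\sigma}(x)^{-1}$ is a $T$-invariant holomorphic $G$-valued function on $X_\sigma\cap X_\tau$; since that overlap is an affine toric variety containing the dense principal orbit $O$, it is a constant $d_{\tau\sigma}\in G$. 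Because $G$ is Abelian and both families of transition functions satisfy the cocycle identity on triple overlaps --- all of which contain $O$, so it is enough to evaluate at $x_0$ --- the $d_{\tau\sigma}$ form a $1$-cocycle with values in the constant Abelian group $G$ over the finite set of maximal cones; as every pairwise overlap is nonempty, this cocycle is a coboundary $d_{\tau\sigma}=c_\tau c_\sigma^{-1}$ for suitable $c_\sigma\in G$. The local bundle maps $(\psi'_\sigma)^{-1}\circ(h\mapsto c_\sigma h)\circ\psi_\sigma$ then agree on overlaps and patch together to a $T$-equivariant isomorphism $\mathcal{E}\to\mathcal{E}'$.

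For surjectivity, start from a collection $\{\rho_\sigma\}$ satisfying the extension condition, set $\widetilde{\rho}_\sigma=\rho_\sigma\circ\pi_\sigma$, and let $\phi_{\tau\sigma}\colon X_\sigma\cap X_\tau\to G$ be the holomorphic extension of $\widetilde{\rho}_\tau\widetilde{\rho}_\sigma^{-1}$; it is unique because $O$ is dense. The cocycle identity $\phi_{\mu\sigma}=\phi_{\mu\tau}\phi_{\tau\sigma}$ holds on $O$, hence everywhere by uniqueness of holomorphic extension, so the charts $X_\sigma\times G$ glue along the $\phi_{\tau\sigma}$ into a holomorphic principal $G$-bundle $\mathcal{E}$ on $X$. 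Declaring $t(x,h)=(tx,\widetilde{\rho}_\sigma(t)h)$ on each chart defines a $T$-action once one checks the compatibility $\phi_{\tau\sigma}(tx)=\widetilde{\rho}_\tau(t)\phi_{\tau\sigma}(x)\widetilde{\rho}_\sigma(t)^{-1}$, which again holds on $O$ because the $\widetilde{\rho}_\sigma$ are homomorphisms and $G$ is Abelian, hence everywhere. One verifies directly that this action is holomorphic, commutes with the $G$-action, and covers the given $T$-action on $X$, so $\mathcal{E}$ is a $T$-equivariant holomorphic principal $G$-bundle; finally, reading off the invariants of $\mathcal{E}$ from the tautological section $x\mapsto[(x,e)]$ over each $X_\sigma$ returns $\widetilde{\rho}_\sigma$, hence $\rho_\sigma$, so $\mathcal{E}$ maps to the chosen collection.

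I expect the main obstacle to be the bookkeeping in the surjectivity step --- confirming that the transition functions obtained by extension are simultaneously compatible with a $T$-action and satisfy the cocycle relation --- and, in the injectivity step, the geometric observation that $T$-invariant holomorphic functions on the affine overlaps $X_\sigma\cap X_\tau$ are constant, which is what turns the comparison of two bundles with the same invariants into a routine coboundary computation in $G$. Everything else is formal, given Lemma \ref{lem:cone1}.
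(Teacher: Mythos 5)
Your proposal is correct and follows essentially the same route as the paper: the forward direction via Lemma \ref{lem:cone1} and the transition-function computation preceding the theorem, and the converse via gluing the charts $X_\sigma\times G$ along the extensions of $\widetilde{\rho}_\tau\widetilde{\rho}_\sigma^{-1}$, with all identities checked on the dense orbit $O$. The one place you go beyond the paper's written argument is the injectivity step, where you correctly observe that two bundles with the same collection differ by a constant $G$-valued cocycle $d_{\tau\sigma}$ on overlaps (all containing $O$) and that this is a coboundary; the paper leaves this verification implicit, so your version is a welcome completion rather than a departure.
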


\begin{proof} Given a $T$-equivariant principal $G$-bundle $\mathcal{E}$ on $X$,
we have a canonical collection of homomorphisms $\{ \rho_{\sigma}
: T_{\sigma} \to G \}$ by Lemma \ref{lem:cone1}. We have shown
above that this collection satisfies the extension condition.
Moreover, the collection of homomorphisms is invariant under an
isomorphism of the bundle by Lemma \ref{lem:equiv}.

Conversely given a collection of homomorphisms $\{\rho_{\sigma}
\}$ satisfying the extension condition, define
$\widetilde{\rho}_{\sigma} = \rho_{\sigma} \circ \pi_{\sigma}$.
Let $\phi_{\tau \sigma}: X_{\sigma}\cap X_{\tau} \to G $ denote
the extension of
 $\widetilde{\rho}_{\tau} \widetilde{\rho}_{\sigma}^{-1}$. Note that $\{\phi_{\tau \sigma} \}$
 satisfies the cocycle condition. Therefore we may  construct
 a principal $G$-bundle $\mathcal{E}$ over $X$ with  $\{\phi_{\tau \sigma} \}$
 as transition functions,
 $$ \mathcal{E} = (\bigsqcup_{\sigma} X_{\sigma} \times G )/ \sim$$
where $(x,g) \sim (y, h)$ for $(x,g) \in X_{\sigma} \times G $ and
$(y,h) \in X_{\tau} \times G $ if and only if
\begin{equation}\label{eq:equiv} x=
y, \; x \in X_{\sigma}\cap X_{\tau} \; {\rm and}\;  h = \phi_{\tau
\sigma}(x) g.\end{equation}

 Define $T$ action on each $X_{\sigma} \times G$ by
 $t(x,g)= (tx, \widetilde{\rho}_{\sigma}(t) g)$. Then note that if $(y,h) \in X_{\tau} \times
 G$ is equivalent to $(x,g) \in X_{\sigma} \times G $, then
 \begin{equation}\label{eq:equiv2}
t(y,h) = t(x, \phi_{\tau \sigma }(x) g) =
 (tx, \widetilde{\rho}_{\tau}(t) \phi_{\tau \sigma }(x) g ).\end{equation}
 Now if $x $ belongs to the open orbit $O =T \subset X_{\sigma} \cap
 X_{\tau}$, then
\begin{equation}\label{eq:equiv3}
 \phi_{\tau \sigma}(tx) \widetilde{\rho}_{\sigma}(t)  = \widetilde{\rho}_{\tau}(tx) \widetilde{\rho}_{\sigma}(tx)^{-1}
 \widetilde{\rho}_{\sigma}(t)
 = \widetilde{\rho}_{\tau}(t) \widetilde{\rho}_{\tau} (x) \widetilde{\rho}_{\sigma} (x)^{-1}  =
 \widetilde{\rho}_{\tau}(t) \phi_{\tau \sigma}(x).\end{equation}

 Since both $\phi_{\tau \sigma}(tx) \widetilde{\rho}_{\sigma}(t) $ and $ \widetilde{\rho}_{\tau}(t) \phi_{\tau
 \sigma}(x)$ are continuous in $x$ on $X_{\sigma} \cap X_{\tau}$
 and $O$ is dense in $X_{\sigma} \cap X_{\tau} $,
\begin{equation}\label{eq:equiv4}
\phi_{\tau \sigma}(tx) \widetilde{\rho}_{\sigma}(t) =
\widetilde{\rho}_{\tau}(t) \phi_{\tau \sigma}(x)\; {\rm for\;
all}\; x \in X_{\sigma} \cap X_{\tau}.
\end{equation}
From \eqref{eq:equiv2} and \eqref{eq:equiv4}, we have
$$t(y,h) = (tx, \phi_{\tau \sigma}(tx) \widetilde{\rho}_{\sigma}(t) g   )$$
 whenever $(x,g) \sim (y,h)
 $. Since $t(x,g)= (tx, \widetilde{\rho}_{\sigma}(t) g)$, by \eqref{eq:equiv} this implies that $t(y,h)\sim t(x,g) $
whenever $(x,g) \sim (y,h)
 $.  In other words, the $T$-actions on the $X_{\sigma} \times G$
 are compatible and define an action on $\mathcal{E}$. It is
 obvious that $\{\rho_{\sigma}\}$ are the local homomorphisms
 associated to $\mathcal{E}$.
\end{proof}

\begin{theorem}\label{thm:discrete} If $G$ is a discrete group, then
any holomorphic $T$-equivariant principal $G$-bundle $\mathcal{E}$ over a
nonsingular toric variety is trivial with trivial $T$-action.
\end{theorem}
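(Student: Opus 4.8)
The plan is to prove first that $\mathcal{E}$ is trivial as a principal $G$-bundle, and then to deduce that the $T$-action is the trivial one. Since $G$ is discrete, $\mathcal{E}$ is locally trivial with discrete fibre, so $\pi\colon\mathcal{E}\to X$ is a covering map; in particular it suffices, in order to trivialize $\mathcal{E}$, to produce a global continuous (equivalently holomorphic) section. Fix a base point $x_0$ in the principal orbit $O\cong T$, taken to be the identity of $T$, and a point $e_0\in\pi^{-1}(x_0)$.

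First I would trivialize $\mathcal{E}$ over $O$: since $T$ acts simply transitively on $O$ with $x_0$ as identity, and the $T$-action on $\mathcal{E}$ commutes with $\pi$, the map $t\mapsto t\cdot e_0$ is a continuous section of $\mathcal{E}|_O\to O$ (its composite with $\pi$ is $t\mapsto t\cdot x_0=t$), so $\mathcal{E}|_O$ is trivial. Next, the complement $X\setminus O$ is the union of the orbits $O_\sigma$ with $\sigma\neq\{0\}$, each of complex dimension $<n$, hence a closed analytic subset of the complex manifold $X$ of real codimension $\geq 2$; by general position the inclusion induces a surjection $\iota_*\colon\pi_1(O,x_0)\twoheadrightarrow\pi_1(X,x_0)$. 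A standard monodromy argument now finishes the triviality claim: any loop in $X$ based at $x_0$ is homotopic in $X$ to a loop in $O$, whose lift starting at $e_0$ is again a loop because $\mathcal{E}|_O$ is trivial, so the lift of the original loop closes up at $e_0$ as well; thus the connected component of $\mathcal{E}$ through $e_0$ maps isomorphically onto $X$, yielding a global section $s$ and an isomorphism $\mathcal{E}\cong X\times G$.

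It remains to identify the $T$-action on $X\times G$ with $t\cdot(x,g)=(tx,g)$. Consider the local action function $\rho_s\colon X\times T\to G$ of $s$, defined by $t\,s(x)=s(tx)\cdot\rho_s(x,t)$. Since the $G$-action on $\mathcal{E}$ is free and continuous, $\rho_s(x,t)$ depends continuously on $x$; as $X$ is connected and $G$ is discrete, $\rho_s(x,t)$ is therefore independent of $x$. Then Lemma \ref{lem:rho3} gives $\rho_s(x,t_1t_2)=\rho_s(x,t_1)\rho_s(x,t_2)$, so $\rho_s$ is a continuous homomorphism $T\to G$, which must be trivial because $T$ is connected and $G$ discrete. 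Hence $t\,s(x)=s(tx)$, i.e. the $T$-action on $X\times G$ is $t\cdot(x,g)=(tx,g)$, which completes the proof.

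The main obstacle is the surjectivity $\pi_1(O,x_0)\twoheadrightarrow\pi_1(X,x_0)$ — equivalently, that adjoining orbits of real codimension $\geq 2$ to the principal orbit does not enlarge the fundamental group; this is available precisely because $X$ is nonsingular. One can also avoid $\pi_1$ entirely: as in Lemma \ref{lem:discrete}, extended from $\CC^d$ to $\CC^d\times(\CC^*)^{n-d}$ by spreading a constant section over $O_\sigma$ via the torus action, each $\mathcal{E}|_{X_\sigma}$ is trivial with trivial local action homomorphism; the gluing analysis of Section 4 then shows the transition functions $\phi_{\tau\sigma}$ are $T$-invariant, hence constant on $X_\sigma\cap X_\tau$ since it contains the dense orbit $O$, and comparing these constants with the locally constant $G$-valued functions that express the section of $\mathcal{E}|_O$ in each chart shows the cocycle $\{\phi_{\tau\sigma}\}$ splits, so $\mathcal{E}$ is trivial.
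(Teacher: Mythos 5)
Your proof is correct, and it takes a genuinely different route from the paper's. The paper argues chart by chart: Lemma \ref{lem:discrete} together with the section-spreading argument of Lemma \ref{lem:cone1} trivializes $\mathcal{E}$ over each $X_\sigma$ with trivial local action homomorphism, and the gluing analysis of Theorem \ref{thm:clas} then shows there is a unique isomorphism class, necessarily the trivial one, because every holomorphic homomorphism $T_\sigma\to G$ is trivial --- this is precisely the alternative you sketch in your closing paragraph. Your main argument instead treats $\pi:\mathcal{E}\to X$ globally as a covering space: the $T$-action itself supplies the section $t\mapsto t\cdot e_0$ over the dense orbit $O$, and since $X\setminus O$ is a closed, locally finite union of orbits of real codimension at least $2$ in the manifold $X$, general position yields $\pi_1(O,x_0)\twoheadrightarrow\pi_1(X,x_0)$, so the monodromy fixes $e_0$ and the component of $\mathcal{E}$ through $e_0$ is the image of a global (automatically holomorphic) section; your identification of the $T$-action as trivial, via constancy of $\rho_s$ in $x$ and Lemma \ref{lem:rho3}, is the computation of Lemma \ref{lem:discrete} applied globally rather than on one chart. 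What your route buys is economy and transparency: it bypasses the gluing machinery entirely, uses only Lemma \ref{lem:rho3} and the continuity of $\rho_s$, and isolates the single topological fact doing the work, namely the surjectivity of $\pi_1(O)\to\pi_1(X)$ (a standard fact for toric varieties). What the paper's route buys is uniformity: the discrete case falls out as a degenerate instance of the same local-to-global scheme used for closed Abelian linear groups. Both uses of nonsingularity are genuine --- yours in the general-position step, the paper's in the identification $X_\sigma\cong\CC^d\times(\CC^*)^{n-d}$.
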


\begin{proof} By using Lemma \ref{lem:discrete} and mimicking the proof of
Lemma \ref{lem:cone1}, we obtain that $\mathcal{E}$ is trivial over any
nonsingular affine toric variety $X_{\sigma}$. We also obtain canonical
homomorphisms $\rho_{\sigma}:T_{\sigma} \to G$. Then, we mimic the proof
of Theorem \ref{thm:clas} and obtain an analogous result.
 Note that a holomorphic homomorphism  from any $T_{\sigma}$ to the discrete group $G$ must
be trivial. The result follows.
\end{proof}

\subsection{Algebraic case}


Suppose that $\mathcal{E}$ is a $T$-equivariant algebraic
principal $G$-bundle over $\CC^n$ where $G$ is a closed  subgroup
of $GL_k(\CC)$.
 It follows from \cite{Rag} that
 $\mathcal{E}$ is trivial. Then the local action
function corresponding to a section $s$ of $\mathcal{E}$ is a
regular map $\rho_s: \CC^n \times T \to G$. Therefore, the
function $F: T
 \to G$ of \eqref{eq:F} is regular.

 As $G$ is closed in the Zariski topology, it is closed in the
analytic topology. So, the proof of Theorem
 \ref{thm:homo} shows that $F$ admits a holomorphic extension
 $F: \CC^n \to G$. A priori $F$ is represented by
$k \times k$ matrix $A$ with entries in the ring $\CC[z_i,
z_i^{-1}, 1\le i \le n ]$. But as $F$ is holomorphic the entries
of $A$ are also convergent power series in $z_1, \ldots, z_n$.
Therefore the entries must be polynomials. Hence the extension $F:
\CC^n \to G$ is regular.
 This yields an algebraic analogue of Theorem
 \ref{thm:homo} which in turn leads to an algebraic analogue of
 Lemma \ref{lem:cone1}. Then the same proof as in the holomorphic
 case gives the following result.

\begin{theorem}\label{thm:clasA} Let $X$ be an $n$-dimensional nonsingular toric
variety with fan $\Xi$.  Let $G$ be a closed Abelian subgroup of
$GL_k(\CC)$. Then the isomorphism classes of $T$-equivariant
algebraic principal $G$-bundles on $X$ are in one-to-one
correspondence with collections of algebraic group homomorphisms
$\{\rho_{\sigma}: T_{\sigma} \to G\, |\, \sigma \,{\rm is \; a\;
maximal \; cone\; of \;} \Xi \}$ which satisfy the extension
condition: Each $ (\rho_{\tau} \circ \pi_{\tau}) (\rho_{\sigma}
\circ \pi_{\sigma})^{-1}$ extends to a $G$-valued regular function
over $X_{\sigma} \cap X_{\tau}$.
\end{theorem}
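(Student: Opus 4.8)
The plan is to reduce the algebraic statement to the holomorphic one already proved in Theorem~\ref{thm:clas}, by showing that every ingredient in that proof has an algebraic incarnation. First I would recall that by \cite{Rag} a $T$-equivariant algebraic principal $G$-bundle over $\CC^n$ is trivial, so it admits an algebraic (regular) section $s$, and the associated local action function $\rho_s\colon \CC^n\times T\to G$ is a regular map. Consequently the function $F\colon T\to G$ defined in \eqref{eq:F} is regular, being built from $\rho_s(0,\cdot)$ and $\rho_s(x_0,\cdot)$.

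Next I would run the proof of Theorem~\ref{thm:homo} verbatim in this setting. The only analytic inputs there are the Riemann extension theorem and Proposition~\ref{prop:lcC}, both applied to the map $F$ after restricting to lines $P_k\to P_{k+1}$. Since $G$ closed in the Zariski topology is closed in the analytic topology, Proposition~\ref{prop:lcC} still applies, and we conclude that $F$ extends to a \emph{holomorphic} map $F\colon \CC^n\to G$. The key regularity observation is that a priori $F$ is given on $T=(\CC^*)^n$ by a $k\times k$ matrix $A$ with entries in $\CC[z_1^{\pm 1},\dots,z_n^{\pm 1}]$; once we know these Laurent polynomials extend to convergent power series in $z_1,\dots,z_n$, the negative-exponent terms must vanish, so the entries are genuine polynomials and the extension $F\colon\CC^n\to G$ is regular. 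This gives an algebraic version of Theorem~\ref{thm:homo}: there is an algebraic section $s'$ whose local action function $\rho_{s'}(x,\cdot)=\rho_{s'}(0,\cdot)$ is a homomorphism, and hence an algebraic version of Lemma~\ref{lem:cone1}, producing regular homomorphisms $\rho_\sigma\colon T_\sigma\to G$ for each maximal cone $\sigma$.

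Finally I would carry over the gluing argument of Section~4 and the proof of Theorem~\ref{thm:clas} word for word. The equivariance computation shows $\widetilde\rho_\tau\widetilde\rho_\sigma^{-1}$ equals $\phi_{\tau\sigma}(x_0)^{-1}\phi_{\tau\sigma}(\cdot)$ on the open orbit, where the transition functions $\phi_{\tau\sigma}$ are now regular on $X_\sigma\cap X_\tau$; thus the extension condition is phrased with ``regular'' in place of ``holomorphic''. Conversely, given a collection of regular homomorphisms $\{\rho_\sigma\}$ satisfying the algebraic extension condition, the regular extensions $\phi_{\tau\sigma}$ satisfy the cocycle condition and glue the $X_\sigma\times G$ into an algebraic principal $G$-bundle carrying the prescribed $T$-action, exactly as in \eqref{eq:equiv}--\eqref{eq:equiv4}. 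The only point requiring genuine care—the main obstacle—is verifying that the analytic extension $F$ produced by Proposition~\ref{prop:lcC} is in fact regular; this is precisely the Laurent-polynomial-versus-power-series argument above, and it is the one place where the algebraic case does not formally collapse onto the holomorphic one.
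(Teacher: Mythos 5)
Your proposal is correct and follows essentially the same route as the paper: triviality over $\CC^n$ via Raghunathan, reuse of the analytic extension argument of Theorem~\ref{thm:homo} (using that Zariski-closed implies analytically closed), the observation that a holomorphic function with Laurent-polynomial matrix entries must in fact have polynomial entries so the extension is regular, and then a verbatim transfer of Lemma~\ref{lem:cone1} and the gluing argument of Theorem~\ref{thm:clas}. You also correctly single out the regularity of $F$ as the one step that does not formally reduce to the holomorphic case, which is exactly the point the paper isolates.
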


\section{Applications} Every complete nonsingular toric variety  $X$
admits an equivariant principal $G$-bundle: Let $\rho: T \to G$ be
a homomorphism. Set $\rho_{\sigma} = \rho$ for every $\sigma \in
\Xi(n)$. Then the extension condition is satisfied as $\rho_{\tau}
\rho_{\sigma}^{-1}$ is the identity map.
However, $X$ may admit
more equivariant principal $G$-bundles.

\begin{theorem}\label{thm:last1} Suppose $G$ is a closed Abelian subgroup of
$GL_k(\CC)$. Then
 an equivariant  principal $G$-bundle over a nonsingular
toric variety $X$ admits a reduction of structure group to the
intersection of $G$ with a maximal torus of $GL_k(\CC)$.
\end{theorem}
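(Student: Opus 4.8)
The plan is to use the classification of Theorem~\ref{thm:clas} together with the structure theory of closed abelian subgroups of $GL_k(\CC)$. Since $G$ is a closed abelian subgroup of $GL_k(\CC)$, by \cite{AM}, Proposition 2.3, $G$ is conjugate to a subgroup of the standard maximal torus $D$ of diagonal matrices times a finite group; more precisely, the Zariski closure of the connected component, together with the commuting semisimple/unipotent decomposition, forces every element of $G$ to be simultaneously diagonalizable. Thus after a global conjugation in $GL_k(\CC)$ we may assume $G \subseteq D$, where $D \cong (\CC^*)^k$ is a maximal torus of $GL_k(\CC)$. The first step, then, is to make this reduction precise and note that it is harmless: conjugating the structure group of the bundle by a fixed element of $GL_k(\CC)$ produces an isomorphic bundle with the same combinatorial data under a relabelling.

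Next I would invoke Theorem~\ref{thm:clas}: the bundle $\mathcal{E}$ corresponds to a collection of holomorphic homomorphisms $\rho_\sigma : T_\sigma \to G$, $\sigma \in \Xi(n)$, satisfying the extension condition that each $(\rho_\tau \circ \pi_\tau)(\rho_\sigma \circ \pi_\sigma)^{-1}$ extends holomorphically over $X_\sigma \cap X_\tau$ with values in $G$. Since $G \subseteq D$, each $\rho_\sigma$ is automatically a homomorphism $T_\sigma \to D$, and the extension condition is now phrased with values in $D$. But a collection $\{\rho_\sigma : T_\sigma \to D\}$ satisfying the extension condition with values in $D$ is precisely the data classifying a $T$-equivariant principal $D$-bundle $\mathcal{E}'$ over $X$, again by Theorem~\ref{thm:clas} applied with the (closed abelian) group $D$ in place of $G$. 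The inclusion $G \hookrightarrow D$ then realizes $\mathcal{E}$ as the bundle induced from $\mathcal{E}'$ via extension of structure group, i.e.\ $\mathcal{E} \cong \mathcal{E}' \times_D G$... wait, that is the wrong direction; rather $\mathcal{E}' = \mathcal{E} \times_G D$ and $\mathcal{E} \subseteq \mathcal{E}'$ is a reduction of structure group from $D$ to $G$. Concretely, the same transition functions $\phi_{\tau\sigma}$, which take values in $G \subseteq D$, and the same local homomorphisms define both bundles; the $G$-subbundle of the $D$-bundle given fiberwise by the $G$-orbits is the desired reduction, and it is $T$-invariant because the $T$-action in each chart $X_\sigma \times D$ is by left multiplication by $\widetilde{\rho}_\sigma(t) \in G$, which preserves the $G$-orbits.

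The main obstacle is the first step: one must be careful that the structure group $G$, as an abstract closed abelian subgroup, really can be conjugated inside a \emph{single} maximal torus of $GL_k(\CC)$ — this is where commutativity and closedness are both essential, since a general abelian subgroup of matrices that is merely dense in a torus, or that fails to be closed, need not be simultaneously diagonalizable, and the classification in \cite{AM} is what guarantees it here. Once the conjugation is in hand, the rest is a formal manipulation of the classification data of Theorem~\ref{thm:clas}: no analysis is needed beyond what has already been established, and the reduction is visibly $T$-equivariant from the explicit description of the $T$-action on the trivializations. I would close by remarking that the maximal torus in the statement is $D$ (or any of its conjugates carrying $G$), so that $G \cap D = G$ in the chosen coordinates, which is why the conclusion is stated as a reduction to $G$ intersected with a maximal torus.
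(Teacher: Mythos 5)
Your first step contains a genuine error that the rest of the argument depends on. A closed Abelian subgroup of $GL_k(\CC)$ is \emph{not} in general conjugate into a maximal torus: the classification in \cite{AM} that you cite places $G$ (up to conjugation) inside a group $K_\eta^*$ of block lower-triangular matrices with equal diagonal entries in each block, not inside the diagonal torus. For instance, $G=\bigl\{\bigl(\begin{smallmatrix} a & 0\\ b & a\end{smallmatrix}\bigr) : a\in\CC^*,\ b\in\CC\bigr\}$ is a closed Abelian subgroup of $GL_2(\CC)$ containing nontrivial unipotent elements, so it cannot be simultaneously diagonalized. Your closing remark that ``$G\cap D=G$ in the chosen coordinates'' is the tell-tale sign: if that were true the theorem would be vacuous, whereas the whole point (see Theorem \ref{thm:last2} and Corollary \ref{cor:last}, where $G$ is a conjugate of $K_\eta^*$ and the reduction is to the proper subgroup of ``diagonal'' elements) is that $G\cap(\text{torus})$ is typically strictly smaller than $G$.

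What actually makes the theorem work, and what the paper's proof does, is to diagonalize not $G$ but the \emph{images of the finitely many homomorphisms} $\rho_{\sigma_1},\dots,\rho_{\sigma_m}: T\to G$. Each $\rho_{\sigma_i}$ is a holomorphic representation of the torus $T$ on $\CC^k$ and hence decomposes into characters (this is where holomorphy of a homomorphism out of $\CC^*$ kills off any unipotent part; cf.\ the Lemma preceding Corollary \ref{cor:diag}); since all these images lie in the Abelian group $G$ they commute, so the weight-space decompositions refine one another and the operators $\rho_{\sigma_i}(t)$ are \emph{simultaneously} diagonalizable by a single $g\in GL_k(\CC)$. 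The reduction is then to $g(\CC^*)^kg^{-1}\cap G$, using closedness of both groups to see that the extended transition functions $\rho_{\sigma_i}\rho_{\sigma_j}^{-1}$ also land in this intersection. Your second half --- that once all local homomorphisms and transition functions take values in a closed subgroup $H\le G$ the same cocycle defines a $T$-equivariant $H$-reduction --- is fine and matches the paper; it is only the source of the subgroup $H$ that needs the argument above rather than a (false) global diagonalization of $G$.
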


\begin{proof} Let $\sigma_1, \ldots, \sigma_m$ be the maximal
cones of the fan of $X$. A homomorphism $\rho_{\sigma_i}: T \to G$
may be regarded as a representation of $T$ on $V:=\CC^k$. We
denote $\rho_{\sigma_i}$ by $\rho_i$.

The representation $\rho_i$  decomposes into one dimensional
representations with not necessarily distinct characters. However
we may collate all one dimensional representations having the same
character $\lambda_i$ into a subspace $V(\lambda_i)$, and write $V
= \oplus_{\lambda_i} V(\lambda_i) $ where $\lambda_i$ varies over
characters of $\rho_i$.

 As $\rho_{1}$ and $\rho_{2}$ commute, $V(\lambda_1)$ is invariant under
 $\rho_2$. Let $V(\lambda_1, \lambda_2)$ denote the direct sum of
 the one dimensional subspaces of $V(\lambda_1)$ which are irreducible
 components of $\rho_2\mid_{V(\lambda_1)}$ with character $\lambda_2$.
 Therefore $$V= \bigoplus_{\lambda_1,\lambda_2} V(\lambda_1, \lambda_2).$$

 It is easy to observe that $V(\lambda_1, \lambda_2) = V(\lambda_1) \cap
 V(\lambda_2)$. Therefore it is invariant under $\rho_3$.
Proceeding inductively, we have
 $$ V = \bigoplus_{\lambda_1, \ldots, \lambda_m} \, V(\lambda_1)\cap \ldots \cap V(\lambda_m).$$
Any one dimensional subspace of $V(\lambda_1)\cap \ldots \cap
V(\lambda_m)$ is an eigenspace of $\rho_i(t)$ for every $i$ and
$t$. Therefore all these operators are simultaneously
diagonalizable. So there exists a $g \in GL(V)$ such that $g^{-1}
\rho_i(t) g \in (\CC^{*})^k \subset GL_k(\CC)$. In other words,
each $\rho_i(t)$ belongs to the intersection of the torus
$g(\CC^*)^k g^{-1}$ with $G$. By closedness of $G$ and $(\CC^*)^k$
in $GL_k(\CC)$, the transition functions $\rho_i \rho_j^{-1}$ also
belong to this intersection.
\end{proof}

Note that the above theorem implies that any $T$-equivariant vector bundle
with an abelian structure group splits equivariantly into a sum of line bundles.

A more precise result is obtained if we use the classification of
Abelian subgroups of $GL_k(\CC)$ in \cite{AM}: Let $\eta =(k_1,
\ldots, k_r)$ be a partition of $k$ into positive integers. Let
$K_{\eta}^*$ be the subgroup of $GL_k(\CC)$ consisting of all
block diagonal matrices such that the $i$-th block has size $k_i
\times k_i$, and every block is lower triangular with identical
nonzero diagonal elements. The diagonal elements of different
blocks need not be the same. Then given an Abelian subgroup $G$ of
$GL_k(\CC)$, there exists an element $h \in GL_k(\CC)$ and a
partition $\eta$ of $k$ such that $hGh^{-1}$ is a subgroup of
$K_{\eta}^*$.

\begin{lemma} Let $L$ denote the subgroup of lower triangular matrices
in $GL_p(\CC)$ having equal diagonal elements. Given any
homomorphism $\rho: \CC^* \to L $, denote the $(i,j)$-th entry of
the matrix $\rho(t)$ by $\rho_{ij}(t)$. Write $\mu(t)$ for the
common value of $\rho_{ii}(t)$. Then $\mu: \CC^* \to \CC^*$ is a
homomorphism and $\rho_{ij}(t) = 0$ if $i\neq j$.
\end{lemma}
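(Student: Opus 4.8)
The plan is to split $\rho$ into its scalar part and its unipotent part. Writing $\rho(t)=\mu(t)I+N(t)$ with $N(t)$ strictly lower triangular and holomorphic in $t$, and evaluating $\rho(t_1t_2)=\rho(t_1)\rho(t_2)$ on the diagonal (where the product of lower triangular matrices is multiplied entrywise), one gets at once $\mu(t_1t_2)=\mu(t_1)\mu(t_2)$; since $\det\rho(t)=\mu(t)^{p}$ is nonzero, $\mu$ takes values in $\CC^{*}$, which is the first assertion. Because the scalars $\mu(t)I$ are central in $L$, we may factor $\rho(t)=\mu(t)I\cdot\widetilde\rho(t)$ with $\widetilde\rho(t):=\mu(t)^{-1}\rho(t)=I+M(t)$ and $M(t):=\mu(t)^{-1}N(t)$ strictly lower triangular; as $\mu(t)^{-1}$ is a scalar and $\mu$ is multiplicative, $\widetilde\rho$ is again a holomorphic homomorphism, now into the group $U$ of lower triangular matrices with $1$'s on the diagonal. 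It remains to prove $M\equiv 0$, for then $\rho(t)=\mu(t)(I+M(t))=\mu(t)I$, so $\rho_{ij}(t)=0$ for $i\neq j$. (Here I use that $\rho$ is a holomorphic homomorphism, as in Theorem \ref{thm:clas}; the statement fails for arbitrary abstract homomorphisms, since $t\mapsto c\log|t|$ is a continuous — but non-holomorphic — additive homomorphism $\CC^{*}\to\CC$.)

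Expanding $\widetilde\rho(t_1t_2)=\widetilde\rho(t_1)\widetilde\rho(t_2)$ gives
\[ M(t_1t_2)=M(t_1)+M(t_2)+M(t_1)M(t_2). \]
I would then show by induction on $d\ge 1$ that $M_{ij}\equiv 0$ whenever $i-j=d$. Fix such a pair $(i,j)$ and assume $M_{kl}\equiv 0$ for all $0<k-l<d$. The $(i,j)$-entry of $M(t_1)M(t_2)$ is $\sum_{k}M(t_1)_{ik}M(t_2)_{kj}$, and a nonzero term needs $M(t_1)_{ik}\neq 0$ and $M(t_2)_{kj}\neq 0$; by strict lower triangularity together with the inductive hypothesis this forces $i-k\ge d$ and $k-j\ge d$, hence $i-j\ge 2d>d$, a contradiction. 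So the $(i,j)$-entry of $M(t_1)M(t_2)$ vanishes and the displayed identity reduces to $M_{ij}(t_1t_2)=M_{ij}(t_1)+M_{ij}(t_2)$: each $M_{ij}$ with $i-j=d$ is a holomorphic homomorphism $\CC^{*}\to(\CC,+)$. The main obstacle is precisely the nonlinear term $M(t_1)M(t_2)$, which prevents $M$ itself from being additive; peeling off one subdiagonal band at a time is what extracts an honest additive homomorphism at each stage.

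The last input is that every holomorphic homomorphism $m\colon\CC^{*}\to(\CC,+)$ is trivial. Composing with the exponential $\exp\colon\CC\to\CC^{*}$, $z\mapsto e^{z}$, yields an entire additive function $\phi=m\circ\exp$; from $\phi(z+h)-\phi(z)=\phi(h)$ and $\phi(0)=0$ one gets $\phi'(z)=\phi'(0)$ for all $z$, so $\phi(z)=cz$, and the $2\pi i$-periodicity of $\exp$ forces $2\pi i\,c=0$, i.e. $\phi\equiv 0$; since $\exp$ is surjective, $m\equiv 0$. Applying this to every $M_{ij}$ completes the induction, so $M\equiv 0$ and the lemma follows. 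Alternatively one may invoke that $U$ is a simply connected group with nilpotent Lie algebra, so every holomorphic homomorphism $\CC\to U$ is a one-parameter subgroup $z\mapsto\exp(zX)$ with $X$ strictly lower triangular, and $\exp(2\pi i X)=I$ then forces $X=0$; I prefer the elementary computation above as it is self-contained and in the spirit of the rest of the paper.
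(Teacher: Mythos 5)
Your proof is correct, and it takes a cleaner route than the paper's. The paper works entry by entry: it derives the twisted functional equation $\rho_{ab}(t_1t_2)=\rho_{ab}(t_1)\mu(t_2)+\mu(t_1)\rho_{ab}(t_2)$, differentiates it, splits into cases according to whether $\mu'$ vanishes, solves an ODE to get $\rho_{ab}(t)=\rho_{ab}'(1)\,t^{a}\ln t$, and then invokes the non-existence of a single-valued holomorphic logarithm on $\CC^{*}$; the induction runs over entries in a lexicographic-type order. You instead factor out the scalar part once and for all, $\rho=\mu\cdot\widetilde\rho$ with $\widetilde\rho$ unipotent, which turns the functional equation on each new subdiagonal band into the untwisted additive one $M_{ij}(t_1t_2)=M_{ij}(t_1)+M_{ij}(t_2)$; your induction on the band distance $d=i-j$, using strict lower triangularity to kill the quadratic term $M(t_1)M(t_2)$, is tidier than the paper's entry ordering, and your proof that a holomorphic homomorphism $\CC^{*}\to(\CC,+)$ vanishes (pull back by $\exp$, get an entire additive hence linear function, and use $2\pi i$-periodicity) avoids the differentiation and case analysis entirely. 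The key analytic input is the same in both arguments --- $\CC^{*}$ admits no nonzero holomorphic additive character --- but your organization isolates it more transparently, and your remark that holomorphy (rather than mere continuity) is essential, witnessed by $t\mapsto c\log|t|$, is a worthwhile observation that the paper leaves implicit.
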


\begin{proof} Comparing the $(1,1)$ entries of $\rho(t_1 t_2)$
and $\rho(t_1) \rho(t_2)$, we have $\mu(t_1 t_2) = \mu(t_1)
\mu(t_2)$. Hence $\mu(t) = t^{a}$ for some $a\in \ZZ$.

Similarly, comparing $(1,2)$ entries, we  have
\begin{equation}\label{eq:homo1}
\rho_{12}(t_1 t_2) = \rho_{12}(t_1) \mu(t_2) + \mu(t_1)
\rho_{12}(t_2).
\end{equation}

For simplicity, write $g(t)$ for $\rho_{12}(t)$.
 Differentiating \eqref{eq:homo1} with respect to
$t_1$ holding $t_2$ constant, we get
\begin{equation}\label{eq:homo2}
g'(t_1 t_2) t_2 = g'(t_1) \mu(t_2) + \mu'(t_1) g(t_2).
\end{equation}
Setting $t_2=1$ in \eqref{eq:homo2} and simplifying, we have
 \begin{equation}\label{eq:homo3}
\mu'(t_1) g(1) = 0.
\end{equation}
If $\mu'(t_1) = 0$, then $\mu(t) =1$ for all $t$. Then from
\eqref{eq:homo1} we have $g(t_1t_2) = g(t_1) + g(t_2)$. This
implies that $g(t) = c \ln(t)$ and by holomorphicity of $g$ on
$\CC^*$, $c$ must be zero. Therefore in this case $g(t) = 0$.

If $\mu'(t_1) \neq 0$, then $g(1)= 0$. Setting $t_1 =1$, $t= t_2$
and using $\mu(t) = t^{a}$ in \eqref{eq:homo2}, we get
 \begin{equation}\label{eq:homo3}
g'( t) t = g'(1) t^{a} + a g(t).
\end{equation}
Solving this differential equation, and using $g(1) = 0$, we
obtain $g(t) = g'(1) t^{a} \ln(t) $. For $g$ to be holomorphic on
$\CC^*$, we must have $g'(1) = 0$ and $g(t) = 0$. Thus
$\rho_{12}(t) = g(t)= 0$.

Suppose $a > b$.
If $\rho_{ij}(t)= 0$ for all $(i,j) $ such that $i \neq j$, and either 
$j < b$ or $ j= b$ but $i< a$, then comparing
 $(a,b)$ terms of $\rho(t_1 t_2)$ and $\rho(t_1) \rho(t_2)$, we
have
\begin{equation}\label{eq:homo2}
\rho_{ab}(t_1 t_2) = \rho_{ab}(t_1) \mu(t_2) + \mu(t_1)
\rho_{ab}(t_2).
\end{equation}
Then by analogy with equation \eqref{eq:homo1} we get
$\rho_{ab}(t)= 0$. Therefore, the lemma follows by induction.
\end{proof}

\begin{corollary}\label{cor:diag}  If $\rho: \CC^* \to G \subset h^{-1} K_\eta^* h$  is a homomorphism, then the image of $\rho$ 
is contained in the intersection $ h^{-1} (\CC^*)^k h \cap G$.   
\end{corollary}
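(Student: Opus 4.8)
The plan is to reduce the statement about $K_\eta^*$ to the block-wise statement proved in the preceding Lemma. First I would conjugate: since $\rho$ takes values in $G \subset h^{-1} K_\eta^* h$, the map $t \mapsto h\,\rho(t)\,h^{-1}$ is a homomorphism $\CC^* \to K_\eta^*$. By definition of $K_\eta^*$ (block-diagonal with the $i$-th block of size $k_i \times k_i$, lower triangular with identical diagonal entries within each block), this composed homomorphism splits as a product of homomorphisms $\rho^{(i)} : \CC^* \to L$, where $L$ is the group of $k_i \times k_i$ lower triangular matrices with equal diagonal entries, one for each block $1 \le i \le r$ in the partition $\eta = (k_1, \ldots, k_r)$.

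Next I would apply the preceding Lemma to each block homomorphism $\rho^{(i)}$ with $p = k_i$. The Lemma tells us that the off-diagonal entries of $\rho^{(i)}(t)$ all vanish and the common diagonal entry $\mu_i(t)$ is a character $\CC^* \to \CC^*$. Hence each block is a scalar matrix, and therefore $h\,\rho(t)\,h^{-1}$ is a diagonal matrix, i.e. an element of $(\CC^*)^k \subset GL_k(\CC)$. Conjugating back, $\rho(t) \in h^{-1}(\CC^*)^k h$ for every $t \in \CC^*$. Since we also assumed $\rho(t) \in G$, we conclude $\rho(t) \in h^{-1}(\CC^*)^k h \cap G$, which is exactly the asserted reduction.

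I do not anticipate a genuine obstacle here; the corollary is essentially an immediate packaging of the Lemma. The only point requiring a modicum of care is the bookkeeping of the conjugation by $h$ and the observation that a group homomorphism into a block-diagonal group is automatically block-diagonal, so that the Lemma may be applied block by block; but this is routine and follows directly from matrix multiplication respecting the block structure of $K_\eta^*$.
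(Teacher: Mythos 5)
Your argument is correct and is exactly the intended one: the paper states this corollary without proof as an immediate consequence of the preceding Lemma, and your conjugation-by-$h$ plus block-by-block application of that Lemma is the same routine packaging the authors had in mind.
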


\begin{theorem}\label{thm:last2}  Suppose $G$ is a closed Abelian subgroup of
$GL_k(\CC)$ that is contained in $h^{-1}K_{\eta}^* h$. Then
 an equivariant  principal $G$-bundle over a nonsingular
toric variety $X$ admits a reduction of structure group to the
intersection of $G$ with the torus $  h^{-1} (\CC^*)^k h $.
\end{theorem}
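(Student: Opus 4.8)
The plan is to combine Theorem~\ref{thm:last1} (or rather its method) with the more precise structural information provided by Corollary~\ref{cor:diag}. The key point is that the conclusion of Theorem~\ref{thm:last1} only guarantees a reduction to $g(\CC^*)^k g^{-1} \cap G$ for \emph{some} $g \in GL_k(\CC)$ produced by simultaneous diagonalization, whereas here we want the reduction to the \emph{specific} torus $h^{-1}(\CC^*)^k h$ determined by the given inclusion $G \subseteq h^{-1} K_\eta^* h$. So the work is to show that the local homomorphisms already take values in $h^{-1}(\CC^*)^k h$ without any further conjugation.

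First I would invoke Theorem~\ref{thm:clas} to reduce the problem to the combinatorial data: an equivariant principal $G$-bundle on $X$ is given by a collection of homomorphisms $\rho_\sigma : T_\sigma \to G$ over the maximal cones $\sigma$, satisfying the extension condition, and a reduction of structure group to a subgroup $G' \subseteq G$ corresponds to the $\rho_\sigma$ and the transition functions $\phi_{\tau\sigma}$ all taking values in $G'$. Next, for each maximal cone $\sigma$, note that $T_\sigma$ is a torus, hence a product of copies of $\CC^*$; restricting $\rho_\sigma$ to each $\CC^*$ factor and applying Corollary~\ref{cor:diag} shows that the image of that restriction lies in $h^{-1}(\CC^*)^k h \cap G$. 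Since $h^{-1}(\CC^*)^k h$ is a group and $T_\sigma$ is generated by these one-parameter factors, the image of all of $\rho_\sigma$ lies in $h^{-1}(\CC^*)^k h \cap G$; equivalently $\widetilde\rho_\sigma = \rho_\sigma \circ \pi_\sigma$ is valued in this subgroup. Finally, the transition functions over $X_\sigma \cap X_\tau$ are the extensions of $\widetilde\rho_\tau \widetilde\rho_\sigma^{-1}$, which takes values in the group $h^{-1}(\CC^*)^k h \cap G$ on the dense open orbit; since both $G$ and $h^{-1}(\CC^*)^k h$ are closed in $GL_k(\CC)$, the extension continues to take values in the intersection, exactly as in the last sentence of the proof of Theorem~\ref{thm:last1}. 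Hence all the structural data of the bundle reduces to $h^{-1}(\CC^*)^k h \cap G$, which is the desired reduction of structure group.

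The main obstacle I anticipate is purely bookkeeping: making sure that ``image of $\rho_\sigma$ lies in $h^{-1}(\CC^*)^k h$'' genuinely follows from Corollary~\ref{cor:diag} applied factor-by-factor. Corollary~\ref{cor:diag} is stated for homomorphisms out of a single $\CC^*$, so one needs the elementary observation that a homomorphism from a torus $(\CC^*)^d$ into $GL_k(\CC)$ whose restriction to each coordinate $\CC^*$ is simultaneously diagonalized by $h$ is itself diagonalized by $h$ --- this is immediate because the images of the coordinate subgroups commute and generate the image, and they all lie in the fixed maximal torus $h^{-1}(\CC^*)^k h$. One should also record that this argument is independent of the choice of $h$ and $\eta$ realizing the inclusion $G \subseteq h^{-1} K_\eta^* h$, though for the statement it suffices to fix one such pair. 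With these remarks in place the proof is short, essentially a citation of Theorem~\ref{thm:clas}, Corollary~\ref{cor:diag}, and the closedness argument already used for Theorem~\ref{thm:last1}.
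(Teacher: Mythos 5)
Your proof is correct and follows essentially the same route as the paper: apply Corollary~\ref{cor:diag} (extended from $\CC^*$ to the torus $T_\sigma$ by restricting to coordinate one-parameter subgroups) to the homomorphisms $\rho_\sigma$ furnished by Theorem~\ref{thm:clas}, then use closedness of $G$ and $h^{-1}(\CC^*)^k h$ to conclude that the transition functions also land in the intersection. The only difference is that you spell out the ``it follows easily'' step that the paper leaves implicit, which is a reasonable addition.
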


\begin{proof} It follows easily from Corollary \ref{cor:diag} that  if $\rho: T \to G \subset h^{-1} K_\eta^* h$ 
 is a homomorphism, then the image of $\rho$ is contained in $ h^{-1} (\CC^*)^k h \cap G$. Apply this to the homomorphisms
$\rho_{\sigma}$ of Theorem \ref{thm:clas} (or Theorem \ref{thm:clasA}). By closedness of G and $  h^{-1} (\CC^*)^k h $, the 
transition maps $\rho_{\sigma} \rho_{\tau}^{-1}$ also take values in  $ h^{-1} (\CC^*)^k h \cap G$.
\end{proof}

\begin{corollary}\label{cor:last} Suppose $G $ is a conjugate of the subgroup $K_{\eta}^{*}$ in $GL_{k}(\CC)$. 
Let $X$ be a complete nonsingular toric variety with fan $\Xi$.
 Then the isomorphism classes of $T$-equivariant principal $G$-bundles over $X$ is parametrized by $\ZZ^{dr}$ where 
$d$ is the number of one dimensional cones of $\Xi$ and $r$ is the number of summands in the partition $\eta$.  
\end{corollary}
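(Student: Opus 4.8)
The plan is to combine Theorem \ref{thm:clasA} (the algebraic classification) with Corollary \ref{cor:diag}, specializing everything to the case $G = h^{-1}K_\eta^* h$ and $X$ complete. First I would observe that, since $X$ is complete, any regular function on an affine chart $X_\sigma$ is a polynomial in the coordinates of $X_\sigma \cong \CC^n$; more to the point, the homomorphisms $\rho_\sigma: T_\sigma \to G$ are algebraic homomorphisms from a torus to $G$. By Corollary \ref{cor:diag}, each $\rho_\sigma$ has image in $h^{-1}(\CC^*)^k h \cap G$, and since $G = h^{-1}K_\eta^* h$, this intersection is exactly the subgroup $D$ of $h^{-1}(\CC^*)^k h$ consisting of block-diagonal matrices whose $i$-th block is a scalar matrix $t_i I_{k_i}$. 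Thus $D \cong (\CC^*)^r$, with $r$ the number of parts of $\eta$. So the classifying data of Theorem \ref{thm:clasA} reduces to a collection $\{\rho_\sigma : T_\sigma \to (\CC^*)^r\}$ of algebraic homomorphisms, one for each maximal cone, subject to the extension condition.

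Next I would unwind the extension condition in this abelian, diagonal setting. Since the target $D \cong (\CC^*)^r$ is a torus and everything commutes, the transition function $(\rho_\tau \circ \pi_\tau)(\rho_\sigma \circ \pi_\sigma)^{-1}$ is an algebraic homomorphism $T \to (\CC^*)^r$, i.e. given by a vector of Laurent monomials in the coordinates on the open orbit $O \cong T$. The extension condition says each of these $r$ component characters extends to a regular (= nowhere-vanishing, hence monomial) function on $X_\sigma \cap X_\tau$. This is precisely the compatibility condition that appears in the classical description of $T$-equivariant line bundles on a toric variety: for each of the $r$ ``coordinates'' we are specifying, for every maximal cone $\sigma$, a character of $T_\sigma$ — equivalently (using the exact sequence $0 \to M_\sigma \to M \to T_\sigma^\vee \to 0$ and the completeness of $X$) an element of $M/M_\sigma$ — and the extension condition forces these to patch along codimension-one faces. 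The standard toric computation (see the description of $\mathrm{Pic}^T(X)$ in \cite{Oda} or \cite{Ful}) identifies such patched data with the group of $T$-Cartier divisors supported on the toric boundary, which for a complete nonsingular $X$ is the free abelian group on the one-dimensional cones: thus each coordinate contributes a copy of $\ZZ^d$, where $d = |\Xi(1)|$.

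Putting these together: the isomorphism classes are classified by $r$ independent copies of the group of $T$-equivariant line bundle data, hence by $\ZZ^{dr}$. The main obstacle — and the part that needs care — is the passage from ``a compatible collection of characters $\rho_\sigma$ of the $T_\sigma$'' to the free abelian group $\ZZ^d$: one must check that the extension condition is equivalent to the collection $\{\rho_\sigma\}$ arising by restriction from a single piecewise-linear (``support-function'') datum on $\Xi$, and then that for a complete nonsingular fan this datum is freely and uniquely encoded by the $d$ integers giving the values on the primitive generators of the rays. Concretely one fixes, for each ray $\varrho \in \Xi(1)$ with primitive generator $v_\varrho$, an arbitrary element of $\ZZ$ for each of the $r$ coordinates; shows these determine each $\rho_\sigma$ (a maximal cone $\sigma$ has exactly $n$ rays, giving $n$ integers per coordinate, which pin down the character of $T_\sigma \cong (\CC^*)^n$); and conversely shows that any extension-compatible collection is obtained this way, the key input being that on the overlap $X_\sigma \cap X_\tau$ the shared rays are exactly the rays of $\sigma \cap \tau$, so regularity of the transition character on that chart is equivalent to agreement of the two ray-data on $\sigma \cap \tau$. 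Since there are no further relations (the rays are a free basis and $X$ is complete, so no boundedness obstruction intervenes), the count is exactly $\ZZ^{dr}$.
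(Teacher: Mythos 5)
Your proposal is correct and follows essentially the same route as the paper: reduce via Corollary \ref{cor:diag} to the block-scalar torus $(\CC^*)^r$, split the classifying data into $r$ independent families of characters subject to the extension condition, and identify each family with a $T$-equivariant line bundle, i.e.\ with a support function on the complete nonsingular fan, giving $\ZZ^d$ per block. The only cosmetic difference is that you unwind the support-function correspondence by hand where the paper cites Oda's Proposition 2.1(i).
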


\begin{proof} We may assume without loss of generality that $G= K_{\eta}^*$. 
A homomorphism $\rho_{\sigma}:T \to G$ is determined by $r$ homomorphisms $\mu_{\sigma}^{i}: T \to \CC^*$ corresponding to the 
diagonal blocks. Moreover the extension condition on $\rho_{\sigma} \rho_{\tau}^{-1}$ in Theorem \ref{thm:clas} or Theorem \ref{thm:clasA}
splits into similar condition on $\mu^{i}_{\sigma} (\mu^{i}_{\tau})^{-1}$ for each block. Therefore we consider each block separately.

For a particular block we consider homomorphisms $\{\mu_{\sigma} : T \to \CC^* \mid \sigma \in \Xi(n) \}$ that satisfy the extension condition.
Such a collection corresponds to an isomorphism class of $T$-equivariant principal $\CC^*$-bundle on $X$.  We may identify $T$-equivariant 
principal $\CC^*$-bundles on $X$ with  $T$-equivariant line bundles on $X$. The latter have been studied by Oda \cite{Oda}, for instance.
In Proposition 2.1(i) of \cite{Oda}, Oda defines a homomorphism from the group of support functions to the group of isomorphism classes
of equivariant line bundles. The homomorphism is injective when $X$ is complete. Moreover, when $X$ is nonsingular, the group of support functions is isomorphic to $\ZZ^d$, and  Theorem \ref{thm:clas} (or Theorem \ref{thm:clasA}) shows that the above homomorphism is surjective. 
More precisely, we can set $\mu_{\sigma}$ equal to the character corresponding to $l_{\sigma}$
 in Oda's construction, when $\sigma$ has maximal dimension. For lower dimensional cones $\tau$, the $l_{\tau}$'s are obtained by restriction 
of $l_{\sigma}$ where $\tau \subset \sigma$.
 Completeness of the fan and the extension condition ensures that we obtain a support function by this procedure.    
\end{proof}



%

\medskip

 {\bf Acknowledgement.} It is a pleasure to thank Indranil Biswas, Paul Bressler, Mikiya Masuda, Sean Paul, Florent
Schaffhauser, V. Uma, and Mauricio Velasco for discussions and comments. Both authors thank the
 Universidad de los Andes for financially supporting this project. The first author also
  thanks the Commission for Developing Countries (CDC)
  of the International Mathematical Union (IMU) for providing financial support for his visit to
  the Universidad de los Andes, Bogot\'a in 2012 which initiated the work.

\renewcommand{\refname}{References}







\vspace{1cm}


\end{document}